\newtheorem{theorem}{Theorem}[section]
\newtheorem{corollary}[theorem]{Corollary}
\newtheorem{proposition}[theorem]{Proposition}
\newtheorem{definition}[theorem]{Definition}
\newtheorem{remark}[theorem]{Remark}
\newtheorem{example}[theorem]{Example}
\numberwithin{equation}{section}
\begin{document}
\title{Symmetries, psudosymmetries and conservation laws in Lagrangian and Hamiltonian $k$-symplectic formalisms}
\author{Florian Munteanu}
%\thanks{This research was supported by FP7-PEOPLE-2012-IRSES-316338}
\date{Department of  Applied Mathematics, University of
Craiova\\Al.I. Cuza 13, Craiova 200585, Dolj, Romania\\
munteanufm@central.ucv.ro}
\maketitle
\begin{abstract}
In this paper we will present Lagrangian and Hamiltonian $k$-symplectic formalisms, we will recall the notions of symmetry and conservation law and we will define the notion of  pseudosymmetry as a natural extension of symmetry. Using symmetries and pseudosymmetries, without the help of a Noether type theorem,  we will obtain new kinds of conservation laws for $k$-symplectic Hamiltonian systems and $k$-symplectic Lagrangian systems.
\end{abstract}

{\bf AMS Subject Classification (2010):} 70S05, 70S10, 53D05.

{\bf Key words:} symmetry, pseudosymmetry, conservation law, Noether theorem, $k$-symplectic
Hamiltonian system, $k$-symplectic Lagrangian system.

\section{Introduction}
The $k$-symplectic formalism (\cite{aw1}, \cite{aw2}, \cite{gun}, \cite{muntsalg}) is the generalization to field theories of the standard
symplectic formalism in autonomous Mechanics, which is the geometric
framework for describing autonomous dynamical systems (\cite{abhmar}, \cite{arnold}). This formalism is based on the polysymplectic formalism developed by G\"{u}nther (\cite{gun}). The $k$-Symplectic Geometry provides the simplest geometric framework for describing certain class of first-order classical field theories. Using this description we analyze different kinds of symmetries for the Hamiltonian and Lagrangian formalisms of these field theories, including the study of conservation laws associated to them and stating Noether's Theorem (\cite{rrsv}, \cite{modestobuc}, \cite{modestoconfer}). Further more, we will generalize  the study of symmetries and conservation laws from classical (symplectic, $k=1$)
  formalism  to the $k$-symplectic formalism  for obtain new kinds of conservation laws for $k$-symplectic Hamiltonian and Lagrangian systems. A similar study for the case
of higher order tangent bundles geometry was done by the author in
\cite{muntfl3}, \cite{muntfl}.

In this paper we will revisited the study of symmetries, conservation laws and relationship between this in the framework of $k$-symplectic
geometry and we will improved the results obtained in \cite{muntfliasi} and \cite{munt2013}. More exactly, we intend to extend the study of symmetries and
conservation laws from Classical Mechanics to the first-order
classical field theories, both for the Lagrangian and Hamiltonian
formalisms, using G\"{u}nther's $k$-symplectic description, and
considering only the regular case. We will find new kinds of
conservation laws, nonclassical, without the help of a Noether's
type theorem, using only the  relationship between symmetries, pseudosymmetries and conservation laws.

The study of symmetries and conservation laws for
$k$-symplectic Hamiltonian systems is, like in the classical case,
a topic of great interest and was developped recently by M.
Salgado, N. Roman-Roy, S. Vilarino in \cite{rrsv}, \cite{modestoconfer} and  L. Bua, I. Buc\u{a}taru, M. Salgado in \cite{modestobuc}. Further more, in the paper \cite{mrrsv} J.C. Marrero, N. Roman-Roy, M. Salgado, S. Vilarino begin the study of symmetries and conservation laws for $k$-cosymplectic Hamiltonian systems, like an extension to field theories of the standard cosymplectic formalism for nonautonomous mechanics (\cite{mmopm}, \cite{mmopmm}). In \cite{rrsv} the Noether's theorem, obtained for a $k$-symplectic Hamiltonian
system, associates conservation laws to so-called Cartan
symmetries. However, these kinds of symmetries do not exhaust the
set of symmetries. As is known, in mechanics and physics there are
symmetries which are not of Cartan type, and which generate
conserved quantities, i.e. conservation laws (\cite{lmr}, \cite{r1}, \cite{r2}).

In the second section are presented the basic tools  and, also the classical result who will be generalized in the
last section. Following \cite{rrsv}, \cite{modestoconfer}, in sections three and four we will review the essential geometric elements of $k$-symplectic formalism who need us to explain and to obtain the results from the last section. So, by generalization from symplectic geometry to $k$-symplectic
geometry, we will obtain new kinds of conservation laws for
$k$-symplectic Hamiltonian and Lagrangian systems, without the help of a Noether
type theorem and without the use of a variational principle. The
main result is a generalization from the classical case ($k=1$) of
a results of G.L. Jones (\cite{jones}) and M.
Cr\^{a}\c{s}m\u{a}reanu (\cite{crasmareanu}).

All manifolds and maps are $C^{\infty}$. Sum over crossed repeated indices is understood.
\section{Basic tools}
Let $M$ be a $n$-dimensional manifold, $C^\infty (M)$ the
ring of real-valued functions, $\mathcal{X}(M)$ the Lie
algebra of vector fields and $A^p(M)$ the $C^\infty (M)$-module of $p$-forms, $
1\leq p\leq n$.

Let us recall that if $\Delta$ is a distribution with a constant rank $k$ on $M$  and $\Phi : M\rightarrow M$ is a diffeomorfism  of $M$, then  $\Phi$ is called an {\it invariant transformation} or {\it finite symmetry} of $\Delta$ if for all $x \in M$, $T \Phi (\Delta_x) \subset \Delta_{\Phi (x)}$ (\cite{krupkova}).

If $\{ \Phi_t^{\xi} \}_{t}$ denote the local one-parameter group of transformations of the vector field $\xi$ on $M$, then $\xi$ is called {\it symmetry} or {\it infinitesimal symmetry} or {\it dynamical symmetry}  of $\Delta$ if for all $t$, $\{ \Phi_t^{\xi} \}_{t}$ is an invariant transformation of $\Delta$.

$\xi$ is a symmetry of $\Delta$ if and only if for all $\zeta \in \Delta$, $ [\xi,\zeta] \in \Delta$, or equivalently, the local flow of $\xi$ transfer integral mappings in integral mappings and consequently, for any integral manifold $Q$ of $\Delta$, $\{ \Phi_t^{\xi} \}_{t} (Q)$ is another integral manifold of $\Delta$.

A function $g:U\rightarrow\mathbf{R}$ ($U$ being an open subset of $M$) is called {\it first integral} or {\it conservation law} of $\Delta$ if the one-form $dg$ belongs to $\Delta$, i.e. $i_{\xi} dg = 0$, for all $\xi \in \Delta$.

If $g$ is a first integral of $\Delta$ on $U$ and $Q$ is an integral manifold of $\Delta$ with integral mapping $i:Q\rightarrow U\subseteq M$, then $d(g \circ i)=0$, that means the function $g$ is constant along the integral manifold $Q$.

For $X\in \mathcal{X}(M)$ with local expression $
X=X^i(x)\frac \partial {\partial x^i}$ we consider the system of
ordinary differential equations which give the flow $\{\Phi
_t\}_t$ of $X$, locally,
\begin{equation}
\dot{x}^i (t)=\frac{dx^i}{dt}(t)=X^i(x^1(t),\ldots ,x^n(t)).
\label{d2}
\end{equation}
\textit{A dynamical system} is a couple $(M,X)$. A dynamical system is
denoted by the flow of $X$, $\{\Phi _t\}_t$ or by the system of
differential equations (\ref{d2}).

A function $f\in C^\infty (M)$ is called \textit{conservation law}
for dynamical system $(M,X)$ if $f$ is constant along the every
integral curves of $X$ (solutions of (\ref{d2})), that is
\begin{equation}
L_Xf=0,  \label{d5}
\end{equation}
where $L_Xf$ means the Lie derivative of $f$\ with respect to $X$.

If $Z\in \mathcal{X}(M)$ is fixed, then $Y\in \mathcal{X}(M)$ is called $Z$-
\textit{pseudosymmetry} for $(M,X)$ if there exists $f$ $\in
C^\infty (M)$ such that $L_XY=fZ$. A $X$-pseudosymmetry for $X$ is
called \textit{pseudosymmetry} for $(M,X)$. $Y\in \mathcal{X}(M)$
is called \textit{symmetry} for $(M,X)$ if $L_XY=0$. Recall that $\omega \in A^p(M)$ is called \textit{invariant
form} for $(M,X)$ if $L_X\omega =0$.

\begin{example} \rm  (\cite{crasmareanu2}, \cite{ibanez}) For Nahm's system from
the theory of static SU(2)-monopoles:
\begin{equation}\label{nahmeq}
\frac{dx^1}{dt} = x^2 x^3 ,\, \,  \frac{dx^2}{dt} = x^3 x^1 , \,
\, \frac{dx^3}{dt} = x^1 x^2  \, ,
\end{equation}
the vector field $Y= x^1 \frac{\partial}{\partial x^1}+ x^2 \frac{\partial}{\partial x^2} + x^3 \frac{\partial}{\partial x^3}$ is a pseudosymmetry.
\end{example}
The notion of pseudosymmetry defined above is a weaker notion of symmetry. This is a natural generalization of the notion of symmetry for
a system of ordinary differential equations  (\ref{d2}). Symmetries and pseudosymmetries are just infinitesimal symmetries of the distribution generated by the vector field $X$ (\cite{krupkova}).

The next theorem which gives the association between
pseudosymmetries and conservation laws is due to M. Cr\^{a}\c{s}m\u{a}reanu (\cite{crasmareanu})
and G.L. Jones (\cite{jones}). We will
find new kinds of conservation laws, nonclassical, without the
help of Noether's type theorem.
\begin{theorem}\label{thmgen}
 Let $X\in \mathcal{X}(M)$ be a fixed vector field and
$\omega \in A^p(M)$ be a invariant $p$-form for $X$. If $Y\in $
$\mathcal{X} (M)$ is
symmetry for $X $ and $S_1$, $\ldots $, $S_{p-1}\in \mathcal{X}(M)$ are $%
(p-1)$ $Y$-pseudosymmetry for $X$ then
\begin{equation}
\Phi =\omega (X,S_1,\ldots ,S_{p-1})  \label{d9}
\end{equation}
or, locally,
\begin{equation}
\Phi = S^{i_1}_1 \cdots S^{i_{p-1}}_{p-1} Y^{i_p} \omega_{i_{1}
\ldots i_{p-1} i_{p}}
\end{equation}
is a conservation laws for $(M,X)$.

Particularly, if $Y$, $S_1$, $\ldots $, $S_{p-1}$ are symmetries
for $X$ then $\Phi $ given by (\ref{d9}) is conservation laws for
$(M,X)$.
\end{theorem}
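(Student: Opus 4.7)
The plan is to prove that $L_X\Phi=0$ directly, using only the Leibniz rule for the Lie derivative on a $p$-form evaluated on $p$ vector fields, together with the antisymmetry of $\omega$. Before computing, I would address a small typographical discrepancy in the statement: the coordinate-free expression $\omega(X,S_1,\ldots,S_{p-1})$ has $p$ slots filled, but the local formula shows $Y^{i_p}$ in the last slot rather than $X^{i_p}$. I will therefore interpret $\Phi=\omega(Y,S_1,\ldots,S_{p-1})$, in accordance with the local formula.

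Next I would expand $L_X\Phi$ using the standard identity
\[
L_X\bigl(\omega(Z_1,\ldots,Z_p)\bigr) = (L_X\omega)(Z_1,\ldots,Z_p) + \sum_{j=1}^{p}\omega(Z_1,\ldots,L_X Z_j,\ldots,Z_p),
\]
applied with $Z_1=Y$ and $Z_{j+1}=S_j$ for $1\le j\le p-1$. The hypotheses are designed to eliminate the resulting terms one at a time: the $(L_X\omega)$ term vanishes because $\omega$ is $X$-invariant; the term involving $L_X Y$ vanishes because $Y$ is a symmetry of $X$; and for each of the remaining $p-1$ terms, the pseudosymmetry hypothesis replaces $L_X S_j$ by $f_j Y$ in the $(j+1)$-st slot of $\omega$.

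At this point the only real observation is that the resulting expression $\omega(Y,S_1,\ldots,f_j Y,\ldots,S_{p-1}) = f_j\,\omega(Y,S_1,\ldots,Y,\ldots,S_{p-1})$ has $Y$ appearing in two distinct slots of an alternating $p$-form, hence is zero. Summing over $j$ still gives zero, so $L_X\Phi=0$ and $\Phi$ is a conservation law for $(M,X)$.

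I do not anticipate a significant obstacle: the only substantive idea is that the pseudosymmetry condition is tailored precisely to produce a repeated $Y$ inside the antisymmetric form. The special case in which all of $Y,S_1,\ldots,S_{p-1}$ are symmetries is even more direct, since then $L_X S_j=0$ kills each residual term without needing to invoke antisymmetry at all. The only cosmetic care needed is to clarify the $X$-versus-$Y$ notational slip before starting the computation.
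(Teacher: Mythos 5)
Your proof is correct and matches the argument the paper itself gives (for the $k$-symplectic generalization of this statement in its final section): expand $L_X\Phi$ by the Leibniz rule, kill the $L_X\omega$ term by invariance and the $L_XY$ term by the symmetry hypothesis, and observe that each pseudosymmetry term $L_XS_j=f_jY$ produces a repeated $Y$ in the alternating form and so vanishes. Your reading of $\Phi$ as $\omega(Y,S_1,\ldots,S_{p-1})$ rather than the misprinted $\omega(X,S_1,\ldots,S_{p-1})$ is also the intended one, as it agrees with the paper's local formula and with the statement and proof of the generalized theorem.
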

If $(M,\omega )$ is a symplectic manifold then the dynamical system $(M,X)$ is said to be a {\em dynamical Hamiltonian system} if there exists a function $H\in C^\infty (M)$ (called {\em the Hamiltonian}) such that $i_X\omega =-dH$, where $i_X$ denotes the interior product with respect to $X$.

It is known that the symplectic form $\omega $ is an invariant
2-form for $(M,X)$ and the Hamiltonian $H$ is a conservation law
for $(M,X)$.

Now, we can apply Theorem \ref{thmgen} to the dynamical Hamiltonian systems.
\begin{proposition}
Let be $(M,X_{H})$ a Hamiltonian system on the symplectic manifold $%
(M,\omega)$, with the local coordinates $(x^i,p_i)$. If $Y\in \mathcal{X}(M)$
is a symmetry for $X_{H}$ and $Z\in \mathcal{X}(M)$ is a $Y$-pseudosymmetry
for $X_{H}$ then
\begin{equation}
\Phi =\omega (Y,Z)  \label{hsdtt}
\end{equation}
is a conservation law for the Hamiltonian system $(M,X_{H})$.

Particularly, if $Y$ and $Z$ are symmetries for $X_{H}$ then $\Phi $ from (\ref{hsdtt}) is a conservation law for $(M,X_{H})$.
\end{proposition}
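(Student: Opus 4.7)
The plan is simply to invoke Theorem \ref{thmgen} in the case $p=2$, with the single pseudosymmetry $S_1=Z$ playing the role of the lone $(p-1)$-tuple entry and $Y$ as the symmetry. The only hypothesis of Theorem \ref{thmgen} not already supplied by the statement of the proposition is that the symplectic form $\omega$ be an invariant $2$-form for $X_H$. I would verify this first via Cartan's magic formula: since $\omega$ is closed and $i_{X_H}\omega=-dH$, one has $L_{X_H}\omega = d\,i_{X_H}\omega + i_{X_H}d\omega = -d^2 H + 0 = 0$.

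As an alternative, self-contained route I would expand $L_{X_H}\Phi$ directly using the Leibniz rule for Lie derivatives of contractions,
\begin{equation*}
L_{X_H}\bigl(\omega(Y,Z)\bigr) = (L_{X_H}\omega)(Y,Z) + \omega(L_{X_H}Y,Z) + \omega(Y,L_{X_H}Z).
\end{equation*}
Plugging in the three hypotheses kills each term in turn: $L_{X_H}\omega=0$ by the computation above, $L_{X_H}Y=0$ because $Y$ is a symmetry of $X_H$, and $L_{X_H}Z=fY$ for some $f\in C^\infty(M)$ because $Z$ is a $Y$-pseudosymmetry. The remaining term is $\omega(Y,fY)=f\,\omega(Y,Y)$, which vanishes by antisymmetry of $\omega$. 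Hence $L_{X_H}\Phi=0$, i.e.\ $\Phi$ is a conservation law for $(M,X_H)$.

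The ``particularly'' clause is then free: if $Z$ happens itself to be a symmetry then it is a $Y$-pseudosymmetry with $f\equiv 0$, so the same computation applies verbatim. The only subtle point worth flagging is an ordering/sign convention: Theorem \ref{thmgen} positions the symmetry as the last argument of $\omega$, while the proposition writes $\omega(Y,Z)$ with the symmetry first; antisymmetry of $\omega$ shows that the two expressions differ only by a global sign, so both are equally valid conservation laws. I do not anticipate any real obstacle here --- the whole argument is a three-line consequence of the Leibniz rule once invariance of $\omega$ has been recorded.
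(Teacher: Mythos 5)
Your proposal is correct and matches the paper's route exactly: the paper offers no separate proof of this proposition, presenting it as a direct application of Theorem \ref{thmgen} with $p=2$ after recording that $\omega$ is an invariant $2$-form for $(M,X_H)$, which is precisely your primary plan (and your Cartan-formula verification of $L_{X_H}\omega=0$ supplies the one detail the paper leaves implicit). Your alternative self-contained Leibniz-rule computation is also sound and in fact mirrors the technique the paper later uses to prove its main theorem in Section~5, and your remark about the argument ordering in $\omega(Y,Z)$ versus the local expression in Theorem \ref{thmgen} correctly identifies a harmless sign discrepancy already present in the paper.
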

\section{$k$-Symplectic Hamiltonian formalism}
Let $(T_k^1)^{\ast} M = T^{\ast} M \bigoplus \ldots \bigoplus T^{\ast} M $ (the Whitney sum of $k$ copies of $T^{\ast} M$) be the $k$-{\it cotangent bundle} of a $n$-dimensional differentiable manifold $M$, with the projection $\tau^{\ast} : (T_k^1)^{\ast} M \rightarrow M$. The natural coordinates on $(T_k^1)^{\ast} M$ are $(x^i, p_i^A)$, $1 \leq i \leq n$, $1 \leq A \leq k$.

The {\it canonical $k$-symplectic structure} in $(T_k^1)^{\ast} M$ is $(\omega_A,V)$, where $V=\ker (\tau^{\ast})_{\ast}$ and $\omega_A = (\tau_A^{\ast})^{\ast} \omega = - d (\tau_A^{\ast})^{\ast} \theta = -d \theta_A$. $\omega = - d \theta$ is the canonical symplectic structure in $T^{\ast} M$, $\theta$  is the Liouvile $1$-form in $T^{\ast} M$ and $\tau_A^{\ast} : (T_k^1)^{\ast} M \rightarrow T^{\ast} M$ is the projection on the $A^{th}$-copy  $T^{\ast} M$ of $(T_k^1)^{\ast} M$. Locally, $ \omega_A = - d \theta_A = - d (p_i^A dx^i) = dx^i \wedge d p_i^A \,$.

If $Z \in \mathcal{X} (M)$ has the local $1$-parametric group $h_s : Q \rightarrow Q$, then the {\it canonical lift} of $Z$ to $(T_k^1)^{\ast}_x M$ is the vector field $Z^{C\ast} \in \mathcal{X}((T_k^1)^{\ast} M)$ whose local $1$-parametric group is $(T_k^1)^{\ast} (h_s) : (T_{k}^{1})^{\ast} M \rightarrow (T_{k}^{1})^{\ast} M $. Locally, if $Z=Z^i\frac{\partial}{\partial x^i} \,$, then
$Z^{C\ast} = Z^i\frac{\partial}{\partial x^i} - p^A_j \frac{\partial Z^j}{\partial x^k} \frac{\partial}{\partial p_k^A} \,$.
\begin{definition} \rm
Let $T_k^1 \mathcal{M} = T \mathcal{M} \bigoplus \ldots \bigoplus T \mathcal{M} $ be the $k$-tangent bundle  of a manifold $\mathcal{M}$.\\
1) {\em A $k$-vector field} on $\mathcal{M}$ is a section $\mathbf{X} : \mathcal{M} \rightarrow T_k^1 \mathcal{M}$ of $\tau : T_k^1 \mathcal{M} \rightarrow \mathcal{M}$, the natural projection. A $k$-vector field $\mathbf{X}$ on $\mathcal{M}$ defines a family of vector fields $X_1$, ..., $X_k$ on $\mathcal{M}$ by $X_A = \tau_A \circ \mathbf{X}$, where $\tau_A : T_k^1 \mathcal{M} \rightarrow T \mathcal{M}$ is the projection on the $A^{th}$-copy  $T \mathcal{M}$ of $T_k^1 \mathcal{M}$.\\
2) {\em An integral section} of $\mathbf{X}$ at a point $x \in \mathcal{M}$ is a map $\psi : U_0 \subset \mathbf{R}^k \rightarrow \mathcal{M}$, with $0 \in U_0$, such that $\phi (0) = x$, $\psi_{\ast} (t) \left( \frac{\partial}{\partial t^A} (t) \right) = X_A (\psi (t))$, for every $t \in U_0$.\\
3) A $k$-vector field $\mathbf{X}$ is called {\em integrable} if there is an integral section at every point of $\mathcal{M}$.
\end{definition}
Let $H : (T_k^1)^{\ast} M \rightarrow \mathbf{R}$ be a {\it Hamiltonian function}. The family $\left( (T_{k}^{1})^{\ast} M,\omega_{A},H\right) $ is  a $k$-{\it symplectic Hamiltonian system}. The {\it Hamilton-de Donder-Weyl (HDW) equations} are
\begin{equation} \label{hdw}
\frac{\partial H}{\partial x^i} (\psi (t))  = - \sum\limits_{A=1}^{k} \frac{\partial \psi_i^A}{\partial t^A} (t) \, , \, \,
\frac{\partial H}{\partial p_i^A} (\psi (t))  = \frac{\partial \psi^i}{\partial t^A} (t)  \, ,
\end{equation}
where $\psi : \mathbf{R}^k \rightarrow (T_k^1)^{\ast} M$, $\psi (t) = \left( \psi^i (t) , \psi^A_i (t)  \right)$, is  a solution.

We denote by $\mathcal{X}_{H}^{k} \left( (T_{k}^{1})^{\ast} M \right)$ the set of $k$-vector fields on $(T_{k}^{1})^{\ast} M$ solutions to
\begin{equation}\label{hdwplus}
\sum\limits_{A=1}^{k} i_{X_A} \omega_A = dH \, .
\end{equation}
Any $k$-vector field $(X_1,\dots,X_k)$ which is a solution of (\ref{hdwplus}) will be called an \textit{evolution $k$-vector field} associated with the Hamiltonian function $H$. It should be noticed that in general the solution to the above equation is
not unique. Nevertheless, it can be proved \cite{mmopm} that there always
exists an \textit{evolution $k$-vector field} associated with a Hamiltonian
function $H$.

Then, if $\mathbf{X} \in \mathcal{X}_{H}^{k} \left( (T_{k}^{1})^{\ast} M \right)$ is integrable and $\psi : \mathbf{R}^k \rightarrow (T_k^1)^{\ast} M$ is an integral section of $\mathbf{X}$, then $\psi (t) = \left( \psi^i (t) , \psi^A_i (t)  \right)$ is  a solution to the HDW equations (\ref{hdw}).

In  \cite{modestobuc}, \cite{modestoconfer}, \cite{rrsv} it is introduced next definition for a conservation law of the Hamilton-de Donder-Weyl (HDW) equations (\ref{hdw}) on $(T_{k}^{1})^{\ast} M$:
\begin{definition} \rm (\cite{modestoconfer}, \cite{rrsv})  A map  $\mathbf{\Phi }=\left( \Phi _{1},\ldots ,\Phi _{k}\right) : (T_{k}^{1})^{\ast}M \longrightarrow \mathbf{R}^{k}$ is called \textbf{conservation law} for the Hamilton-de Donder-Weyl (HDW) equations (\ref{hdw}) if the divergence of the function
$$\mathbf{\Phi } \circ  \psi = \left( \Phi _{1} \circ  \psi,\ldots ,\Phi _{k}\circ  \psi \right) : U\subset \mathbf{R}^{k}\rightarrow\mathbf{R}^{k}$$
is zero, for every $\psi  : U \subset \mathbf{R}^{k} \rightarrow M $ solution of the Hamilton-de Donder-Weyl (HDW) equations (\ref{hdw}), that is
\begin{equation}
\sum\limits_{A=1}^{k} \frac{\partial \left( \Phi_A \circ  \psi \right) }{\partial t^A} (t) = 0 \, .
\label{lagrconslaw}
\end{equation}
\end{definition}
In \cite{rrsv} was proved the next result:
\begin{proposition} \label{phcons1}
 If $\mathbf{\Phi }=\left( \Phi _{1},\ldots ,\Phi _{k}\right) : (T_{k}^{1})^{\ast} M \longrightarrow \mathbf{R}^{k}$ is a conservation law for the Hamilton-de Donder-Weyl (HDW) equations (\ref{hdw}), then for every integrable $k$-vector field $\mathbf{X}=(X_{1},\dots ,X_{k}) \in \mathcal{X}_{H}^{k}((T_{k}^{1})^{\ast} M)$ we have
\begin{equation}
\sum\limits_{A=1}^{k} L_{X_A} \Phi_A = 0 \, .
\end{equation}
\end{proposition}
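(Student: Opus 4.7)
My plan is to verify the conclusion pointwise, using the fact that integrability of $\mathbf{X}$ produces, through every point of $(T_k^1)^\ast M$, an integral section which is simultaneously a solution of the HDW equations and along which the hypothesis of the proposition directly applies.

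First I would fix an arbitrary point $x\in (T_k^1)^\ast M$. Because $\mathbf{X}=(X_1,\dots,X_k)$ is integrable, there exists an integral section $\psi:U_0\subset\mathbf{R}^k\to (T_k^1)^\ast M$ with $\psi(0)=x$ and $\psi_\ast(t)\bigl(\tfrac{\partial}{\partial t^A}(t)\bigr)=X_A(\psi(t))$ for every $t\in U_0$ and every $A$. Since $\mathbf{X}\in\mathcal{X}_H^k((T_k^1)^\ast M)$ satisfies $\sum_A i_{X_A}\omega_A=dH$, the paragraph immediately preceding the definition of conservation law in the excerpt asserts that $\psi$ is then a solution of the HDW equations (\ref{hdw}).

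Next I would apply the hypothesis: because $\mathbf{\Phi}$ is a conservation law and $\psi$ is an HDW-solution, equation (\ref{lagrconslaw}) gives
\begin{equation}
\sum_{A=1}^k \frac{\partial (\Phi_A\circ\psi)}{\partial t^A}(t)=0\qquad\text{for all }t\in U_0.
\end{equation}
Evaluating at $t=0$ and applying the chain rule to each summand yields
\begin{equation}
\frac{\partial (\Phi_A\circ\psi)}{\partial t^A}(0)= d\Phi_A\bigl(\psi_\ast(0)\tfrac{\partial}{\partial t^A}\bigr)=d\Phi_A\bigl(X_A(x)\bigr)=(L_{X_A}\Phi_A)(x),
\end{equation}
where in the last equality I use that the Lie derivative of a function along a vector field equals the action of its differential on that vector field. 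Summing over $A$ and using the conservation law identity at $t=0$ gives $\sum_{A=1}^k (L_{X_A}\Phi_A)(x)=0$. Since $x$ was arbitrary, the identity $\sum_{A=1}^k L_{X_A}\Phi_A=0$ holds on all of $(T_k^1)^\ast M$.

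There is essentially no serious obstacle here; the argument is a chain-rule computation together with the observation that integrability of $\mathbf{X}$ is exactly what allows one to translate the ``along every HDW-solution'' hypothesis into a pointwise statement on $(T_k^1)^\ast M$. The only point requiring a little care is to make sure the integral section at $x$ is indeed an HDW-solution, which is precisely why the proposition restricts to $\mathbf{X}\in\mathcal{X}_H^k$ rather than to an arbitrary integrable $k$-vector field.
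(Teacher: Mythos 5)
Your proof is correct. The paper itself does not prove this proposition---it quotes it from the cited reference \cite{rrsv}---but your argument (pick the integral section of $\mathbf{X}$ through an arbitrary point, note it is an HDW solution since $\mathbf{X}\in\mathcal{X}_H^k$, then apply the chain rule at $t=0$ and use $L_{X_A}\Phi_A=d\Phi_A(X_A)$) is exactly the standard proof of this statement, and every step is justified by facts stated explicitly in the text.
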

The converse of  proposition \ref{phcons1} may not be true and the reason is that we might have solutions $\psi$ of the Hamilton-de Donder-Weyl (HDW) equations (\ref{hdw}) that are not solutions to some $\mathbf{X} \in \mathcal{X}_{H}^{k}((T_{k}^{1})^{\ast} M)$.

However, under some assumption, the converse is true (\cite{modestobuc}):
\begin{proposition} \label{phcons2}
If we assume that there exists a vector field $X \in \mathcal{X} ((T_{k}^{1})^{\ast} M)$ such that
\begin{equation}
i_X (\omega_0)_A = d f_A, \forall 1 \leq A \leq k
\end{equation}
for some functions $f_A : (T_{k}^{1})^{\ast} M \longrightarrow \mathbf{R}^{k}$, then $\mathbf{F}=(f_1, \ldots ,f_k)$ is a conservation law of HDW equations (\ref{hdw}) if and only if $\sum\limits_{A=1}^{k} X_A (f_A) = 0$, for every integrable $k$-vector field $\mathbf{X}=(X_{1},\dots ,X_{k}) \in \mathcal{X}_{H}^{k}((T_{k}^{1})^{\ast} M)$.
\end{proposition}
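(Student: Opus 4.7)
The plan is to establish both directions by computing, for a solution $\psi$ of HDW, the divergence $\sum_A \partial(f_A\circ\psi)/\partial t^A$ and relating it intrinsically to the quantity $\sum_A X_A(f_A)$ attached to any evolution $k$-vector field. The hypothesis $i_X\omega_A = df_A$ is what lets both sides be expressed through a single globally defined function, and this is precisely the device that repairs the failure of the converse of Proposition \ref{phcons1}.

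First I would compute, for a solution $\psi:U\subset\mathbf{R}^k\to (T_k^1)^*M$ of (\ref{hdw}),
\begin{equation*}
\sum_{A=1}^k \frac{\partial(f_A\circ\psi)}{\partial t^A}(t)
= \sum_{A=1}^k df_A\bigl(\psi_*(t)(\partial/\partial t^A)\bigr)
= \sum_{A=1}^k \omega_A\bigl(X,\psi_*(t)(\partial/\partial t^A)\bigr)\big|_{\psi(t)}.
\end{equation*}
Using the local expression $\omega_A = dx^i\wedge dp_i^A$, writing $X = X^i\partial/\partial x^i + X_i^A\partial/\partial p_i^A$, and expanding with $\psi_*(\partial/\partial t^A)$ in coordinates, this last sum becomes $X^i\sum_A \partial\psi_i^A/\partial t^A - \sum_{A,i} (\partial\psi^i/\partial t^A) X_i^A$. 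Substituting the HDW equations (\ref{hdw}) collapses the expression to $-X(H)\circ\psi$. Thus, for every solution $\psi$ of (\ref{hdw}),
\begin{equation*}
\sum_{A=1}^k \frac{\partial(f_A\circ\psi)}{\partial t^A}(t) = -X(H)\bigl(\psi(t)\bigr).
\end{equation*}

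Next I would evaluate $dH$ on $X$ via (\ref{hdwplus}): for any $\mathbf{X}=(X_1,\dots,X_k)\in\mathcal{X}_H^k((T_k^1)^*M)$,
\begin{equation*}
X(H) = dH(X) = \sum_{A=1}^k (i_{X_A}\omega_A)(X) = \sum_{A=1}^k \omega_A(X_A,X) = -\sum_{A=1}^k df_A(X_A) = -\sum_{A=1}^k X_A(f_A),
\end{equation*}
so the identity $\sum_A X_A(f_A) = -X(H)$ holds globally on $(T_k^1)^*M$, whether or not $\mathbf{X}$ is integrable.

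Combining the two computations, both $\sum_A \partial(f_A\circ\psi)/\partial t^A \circ \psi$ and $\sum_A X_A(f_A)$ coincide with the single function $-X(H)$ (restricted to the image of $\psi$, respectively as a function on $(T_k^1)^*M$). The forward direction is then a restatement of Proposition \ref{phcons1}, and the converse follows because the vanishing of $\sum_A X_A(f_A)$ along every integrable evolution $k$-vector field forces $X(H)$ to vanish at every point reachable by an integral section, hence along every HDW solution $\psi$, yielding (\ref{lagrconslaw}). The main obstacle—precisely the one recalled in the paragraph preceding the statement, that a solution $\psi$ of (\ref{hdw}) need not arise from an integrable $\mathbf{X}\in\mathcal{X}_H^k((T_k^1)^*M)$—is neutralized by the hypothesis $i_X\omega_A=df_A$, which tethers the divergence of $\mathbf{F}\circ\psi$ to a single function $-X(H)$ independent of the choice of $\mathbf{X}$ or $\psi$.
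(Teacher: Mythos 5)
The paper does not actually prove this proposition: it is imported verbatim from the reference \cite{modestobuc} (Bua--Buc\u{a}taru--Salgado), so there is no in-paper argument to compare yours against. On its own merits your argument is correct and is the standard one: both computations check out in coordinates (with $\partial f_A/\partial p^A_i = X^i$, $\partial f_A/\partial x^i = -X^A_i$ read off from $i_X\omega_A = df_A$, the divergence of $\mathbf{F}\circ\psi$ collapses under (\ref{hdw}) to $-X(H)\circ\psi$, and the intrinsic chain $X(H)=\sum_A\omega_A(X_A,X)=-\sum_A df_A(X_A)$ has the right signs), and identifying both sides with the single function $-X(H)$ is exactly the mechanism that makes the equivalence work.

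The one step you should tighten is the last sentence of the converse. Your phrase ``at every point reachable by an integral section, hence along every HDW solution $\psi$'' elides precisely the difficulty recalled before Proposition \ref{phcons1}: an HDW solution need not be an integral section of any integrable $\mathbf{X}\in\mathcal{X}_{H}^{k}((T_{k}^{1})^{\ast}M)$, so ``reachable by an integral section'' does not by itself cover the image of $\psi$. The correct routing is: the hypothesis $\sum_A X_A(f_A)=0$ is an identity of functions on the domain of each integrable evolution $k$-vector field, and combined with $\sum_A X_A(f_A)=-X(H)$ it forces $X(H)\equiv 0$ on that domain; then your first computation, which uses only the HDW equations and never mentions $\mathbf{X}$, gives $\sum_A\partial(f_A\circ\psi)/\partial t^A=-X(H)\circ\psi=0$ for \emph{every} solution $\psi$ whose image lies in such a domain. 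This still presupposes that integrable evolution $k$-vector fields exist locally around (the images of) HDW solutions -- the paper only guarantees existence of evolution $k$-vector fields, not integrable ones -- and that assumption is implicit in the cited source as well; it is worth stating explicitly rather than absorbing it into ``hence.''
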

\begin{definition} \rm \label{defhamsym}
Let $\left( (T_{k}^{1})^{\ast} M,\omega_{A},H\right) $ be a $k$-symplectic Hamiltonian system.\\
1) A {\it symmetry} is a diffeomorphism $\Phi : (T_{k}^{1})^{\ast} M \rightarrow (T_{k}^{1})^{\ast} M$ such that, for every solution $\psi$ of the Hamilton-de Donder-Weyl (HDW) equations (\ref{hdw}), we have that $\Phi \circ \psi$ is also a solution Hamilton-de Donder-Weyl (HDW) equations (\ref{hdw}).\\
2) An {\it infinitesimal symmetry} is a vector field $Y \in \mathcal{X} ((T_{k}^{1})^{\ast} M)$ whose local flows are local symmetries.\\
3) A {\it Cartan symmetry} is a diffeomeorphism $\Phi : (T_{k}^{1})^{\ast} M \rightarrow (T_{k}^{1})^{\ast} M$ such that $\Phi^{\ast} \omega_A = \omega_A$, for all $1 \leq A \leq k$, and $\Phi^{\ast} H=H$ (up to a constant).\\
4) An {\it infinitesimal Cartan symmetry} is a vector field $Y \in \mathcal{X} ((T_{k}^{1})^{\ast} M)$ such that\\
$L_Y \omega_A$, for all $1 \leq A \leq k$, and $L_Y H = 0$.
\end{definition}

If $\Phi = (T_k^1)^{\ast} \phi$, for some diffeomorphism $\phi : M \rightarrow M$, then the (Cartan) symmetry  $\Phi$ is said to be {\it natural}.

If $Y=Z^{C\ast}$ for $Z \in \mathcal{M}$, then the infinitesimal (Cartan) symmetry is said to be {\it natural}.

If $\Phi$ is a Cartan symmetry then it is a symmetry.

If $\mathbf{X}=(X_{1},\dots ,X_{k}) \in \mathcal{X}_{H}^{k}((T_{k}^{1})^{\ast} M)$, then  $\Phi_{\ast} \mathbf{X}=(\Phi_{\ast} X_{1},\dots ,\Phi_{\ast} X_{k}) \in \mathcal{X}_{H}^{k}((T_{k}^{1})^{\ast} M)$.
\begin{proposition} (\cite{rrsv}, \cite{modestoconfer})
Let $Y \in \mathcal{X} \left( (T_{k}^{1})^{\ast} M \right)$ be an infinitesimal Cartan symmetry. Then, for every $p \in (T_{k}^{1})^{\ast} M$, there is an open neighbourhood $U_p$, such that\\
i) There exists $f_A \in C^{\infty} (U_p)$, unique up to a constant functions, such that $i_Y \omega_A = df_A$.\\
ii) There exists $\zeta_A \in C^{\infty} (U_p)$, verifying $L_Y \theta_A = d \zeta_A$ on $U_p$ and then $f_A = i_Y \theta_A - \zeta_A$ (up to a constant function on $U_p$).
\end{proposition}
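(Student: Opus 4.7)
The plan is to combine Cartan's magic formula with the Poincaré lemma on a contractible neighbourhood of $p$, treating each index $A$ separately. The hypothesis that $Y$ is an infinitesimal Cartan symmetry gives $L_Y \omega_A = 0$ for every $A$, and the crucial structural fact that will do all the work is that each $\omega_A = -d\theta_A$ is exact.

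\textbf{Part (i).} By Cartan's formula, $L_Y \omega_A = d(i_Y \omega_A) + i_Y d\omega_A$. Since $\omega_A$ is exact, $d\omega_A = 0$, and since $L_Y \omega_A = 0$ by assumption, one obtains $d(i_Y \omega_A) = 0$. Hence $i_Y \omega_A$ is a closed $1$-form on $(T_k^1)^{\ast} M$. Choosing $U_p$ to be a contractible coordinate neighbourhood of $p$ (for instance the domain of a natural chart $(x^i, p_i^A)$), the Poincar\'e lemma yields $f_A \in C^{\infty}(U_p)$ with $df_A = i_Y \omega_A$. Uniqueness up to a constant is immediate: any two primitives differ by a function with vanishing differential, which is locally constant on the connected neighbourhood $U_p$.

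\textbf{Part (ii).} First observe that $L_Y \theta_A$ is closed: since $L_Y$ commutes with $d$,
\[
d(L_Y \theta_A) = L_Y(d\theta_A) = -L_Y \omega_A = 0.
\]
Applying the Poincar\'e lemma again on $U_p$ gives $\zeta_A \in C^{\infty}(U_p)$ with $L_Y \theta_A = d\zeta_A$. To identify $\zeta_A$ with $i_Y \theta_A - f_A$, apply Cartan's formula to $\theta_A$:
\[
L_Y \theta_A = d(i_Y \theta_A) + i_Y d\theta_A = d(i_Y \theta_A) - i_Y \omega_A = d(i_Y \theta_A) - df_A = d(i_Y \theta_A - f_A).
\]
Combining with $L_Y \theta_A = d\zeta_A$ yields $d(i_Y \theta_A - f_A - \zeta_A) = 0$, so on the connected neighbourhood $U_p$ the function $i_Y \theta_A - f_A - \zeta_A$ is constant. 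Rearranging gives exactly the stated relation $f_A = i_Y \theta_A - \zeta_A$ up to a constant.

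The only real subtlety is topological: both $i_Y \omega_A$ and $L_Y \theta_A$ are merely closed, not globally exact in general, which is precisely why the statement only asserts the existence of a neighbourhood $U_p$ rather than a global primitive. No genuine computational obstacle arises; the argument is the standard Cartan calculus familiar from the symplectic ($k=1$) case, applied component-wise to each of the $k$ forms $\omega_A$, with the hypothesis $L_Y \omega_A = 0$ providing both the closedness of $i_Y \omega_A$ in step (i) and, through $d\theta_A = -\omega_A$, the closedness of $L_Y \theta_A$ in step (ii).
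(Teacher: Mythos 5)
Your argument is correct: Cartan's magic formula plus the exactness of $\omega_A = -d\theta_A$ gives closedness of $i_Y\omega_A$ and of $L_Y\theta_A$, and the Poincar\'e lemma on a contractible connected neighbourhood $U_p$ produces $f_A$ and $\zeta_A$ with the stated relation up to constants. The paper itself states this proposition without proof, citing \cite{rrsv} and \cite{modestoconfer}, and your proof is precisely the standard argument used there, so there is nothing to object to.
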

\begin{theorem} (Noether Theorem) (\cite{rrsv}, \cite{modestoconfer}) Let $Y \in \mathcal{X} \left( (T_{k}^{1})^{\ast} M \right)$ be an infinitesimal Cartan symmetry.\\
i) For every $p \in (T_{k}^{1})^{\ast} M$, there is an open neighbourhood $U_p$, such that the functions $f_A = i_Y \theta_A - \zeta_A$, $1 \leq A \leq k$, define a conservation law $\mathbf{f}=(f_1, \ldots , f_k)$ on $U_p$.\\
ii) For every $\mathbf{X}=(X_{1},\dots ,X_{k}) \in \mathcal{X}_{H}^{k}\left( (T_{k}^{1})^{\ast} M \right)$, we have $\sum\limits_{A=1}^k L_{X_A} f_A = 0$ on $U_p$.
\end{theorem}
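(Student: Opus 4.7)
The two parts of the theorem are best proved in the order \emph{(ii) then (i)}, since part (i) will follow from part (ii) via Proposition~\ref{phcons2} once the local potentials $f_A$ are in hand.

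For part (ii) the plan is a short direct computation. Since $Y$ is an infinitesimal Cartan symmetry, the preceding proposition furnishes, on a neighbourhood $U_p$, functions $f_A$ satisfying
\[
 i_Y\omega_A = df_A, \qquad 1\le A\le k.
\]
Given any $\mathbf{X}=(X_1,\dots,X_k)\in\mathcal{X}^k_H((T_k^1)^\ast M)$, I would use that $f_A$ is a function (so $L_{X_A}f_A=i_{X_A}df_A$), then substitute $df_A=i_Y\omega_A$, then apply the graded anticommutativity of contractions on a $2$-form, $i_{X_A}i_Y\omega_A=-i_Y i_{X_A}\omega_A$, and finally invoke the defining equation~\eqref{hdwplus} of an evolution $k$-vector field:
\[
 \sum_{A=1}^{k} L_{X_A}f_A
 = \sum_{A=1}^{k} i_{X_A} i_Y \omega_A
 = -\,i_Y\!\Bigl(\sum_{A=1}^{k} i_{X_A}\omega_A\Bigr)
 = -\,i_Y\,dH = -L_Y H = 0,
\]
where the last equality uses the Cartan-symmetry hypothesis $L_Y H=0$. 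This gives (ii).

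For part (i), the strategy is to apply Proposition~\ref{phcons2} with the distinguished vector field there taken to be $Y$ itself. The hypothesis of that proposition, namely the existence of a vector field whose contractions with the $\omega_A$ give exact forms $df_A$, is precisely the content of the previous proposition applied to our Cartan symmetry $Y$. Since part (ii) establishes $\sum_{A} X_A(f_A)=0$ for \emph{every} integrable $\mathbf{X}\in\mathcal{X}^k_H((T_k^1)^\ast M)$, Proposition~\ref{phcons2} concludes that $\mathbf{f}=(f_1,\dots,f_k)$ is a conservation law on $U_p$.

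The only real subtlety I anticipate is purely bookkeeping: the functions $f_A$ are defined only locally and only up to additive constants, so one must phrase (i) on the neighbourhood $U_p$ from the previous proposition and note that the ambient constants do not affect $df_A$, the divergence condition~\eqref{lagrconslaw}, or the identity in (ii). The sign in $i_{X_A}i_Y\omega_A=-i_Y i_{X_A}\omega_A$ is the one place where a careless step would spoil the cancellation, but this is standard and presents no genuine difficulty once one remembers that $\omega_A$ is a $2$-form.
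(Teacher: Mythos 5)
Your proposal is correct. Note that the paper itself gives no proof of this theorem --- it is quoted verbatim from \cite{rrsv} and \cite{modestoconfer} --- but your argument is exactly the standard one used there: the chain $\sum_A L_{X_A}f_A=\sum_A i_{X_A}i_Y\omega_A=-i_Y\bigl(\sum_A i_{X_A}\omega_A\bigr)=-i_Y dH=-L_YH=0$ for part (ii), and Proposition~\ref{phcons2} applied to $X=Y$ (whose hypothesis $i_Y\omega_A=df_A$ is supplied by the preceding proposition) for part (i), with the signs and the local/up-to-constant caveats handled correctly.
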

\section{$k$-Symplectic Lagrangian formalism}
Let be $T_k^1 M = T M \bigoplus \ldots \bigoplus T M$ the $k$-{\it tangent bundle} of a manifold $M$, with natural projection $\tau : T_k^1 M \rightarrow M$. The natural coordinates on $T_k^1 M$ are $(x^i,v^i_A) \,$.

Locally, for $Z_x = a^i \frac{\partial}{\partial x^i} \in T_x M$ the {\it vertical $A$-lift} of $Z$ at $\left( v_{1_x}, \ldots, v_{k_x} \right) \in T_k^1 M$ is the vector field $(Z_x)^{V_A} \left( v_{1_x}, \ldots, v_{k_x} \right) = a^i \frac{\partial}{\partial v_A^i}\mid_{\left( v_{1_x}, \ldots, v_{k_x} \right)} \,$.

Locally, the {\it Liouville vector field} is $\Delta  = \sum\limits_{A=1}^{k} v^i_A \frac{\partial}{\partial v_A^i}$ and the $k$-{\it tangent structure} on $T_k^1 M$ is the set $(S^1, \ldots , S^k)$ of $(1,1)$-tensor fields defined by  $S^A = \frac{\partial}{\partial v_A^i} \otimes dx^i \,$.

Let $Z \in \mathcal{X} (M)$ with the local $1$-parametric group $h_s : Q \rightarrow Q$. Then the {\it canonical lift} of $Z$ to $(T_k^1)_x M$ is the vector field $Z^{C} \in \mathcal{X}(T_k^1 M)$ whose local $1$-parametric group is $ T_k^1 h_s : T_k^1 M \rightarrow T_k^1 M  \,$.
 Locally, if $Z=Z^i\frac{\partial}{\partial x^i} \,$, then
$Z^{C} = Z^i\frac{\partial}{\partial x^i} + v_A^j \frac{\partial Z^k}{\partial x^j} \frac{\partial}{\partial v_A^k} \,$.
\begin{definition} \rm
A second order partial differential equation (SOPDE) is a $k$-vector field $\Gamma$ in $T_k^1 M $ which is a section of the projection
  $T_k^1 \tau : T_k^1 (T_k^1 M ) \rightarrow T_k^1 M) \,$.
\end{definition}
Locally,  a SOPDE $\Gamma = (\Gamma_1, \ldots , \Gamma_k)$ is given by the vector fields $\Gamma_A = v^i_A \frac{\partial}{\partial x^i}+ \left( \Gamma_A \right)^{i}_{B} \frac{\partial}{\partial v_B^i}  \,$.
\begin{proposition}
If $\psi$ is an integral section of an integrable SOPDE $\Gamma$, then $\psi = \phi^{(1)}$, where $\phi^{(1)}$ is the first prolongation of $\phi = \tau \circ \psi $ and $\phi$ is a solution to the system
\begin{equation}\label{sopde}
\frac{\partial^2 \phi^i}{\partial t^A \partial t^B} (t)= \left( \Gamma_A \right)^{i}_{B} \left( \phi^j (t) , \frac{\partial \phi^j}{\partial t^C} (t)  \right)  \, .
\end{equation}
Conversely, if $\phi : \mathbf{R}^k \rightarrow M $ is a solution of the system (\ref{sopde}), then $\phi^{(1)}$ is an integral section of SOPDE $\Gamma$.
\end{proposition}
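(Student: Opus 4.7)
The plan is to do everything in the local chart $(x^i, v^i_A)$ on $T_k^1 M$: both hypotheses become transparent component equalities there, and both directions of the statement fall out by matching components. First I would write the integral-section condition $\psi_\ast(t)(\partial/\partial t^A) = \Gamma_A(\psi(t))$ in local form. Writing $\psi(t) = (\psi^i(t), \psi^i_B(t))$ and, provisionally, $\Gamma_A = \Gamma_A^i \frac{\partial}{\partial x^i} + (\Gamma_A)^i_B \frac{\partial}{\partial v_B^i}$, the pushforward computation splits the condition into the two blocks
\begin{equation*}
\frac{\partial \psi^i}{\partial t^A}(t) = \Gamma_A^i(\psi(t)), \qquad \frac{\partial \psi^i_B}{\partial t^A}(t) = (\Gamma_A)^i_B(\psi(t)).
\end{equation*}
Next I invoke the SOPDE hypothesis: being a section of $T_k^1\tau$ means that $\tau_\ast \Gamma_A(\mathbf{v})$ equals the $A$-th entry $v_A$ of the base $k$-vector, which pins down $\Gamma_A^i(x,v) = v_A^i$ (this is precisely the local form already displayed in the excerpt). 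The first block then reads $\partial \psi^i/\partial t^A = \psi^i_A$.

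Putting $\phi = \tau \circ \psi$, so $\phi^i = \psi^i$, this last equality becomes $\partial \phi^i/\partial t^A = \psi^i_A$. By definition of the first prolongation, $\phi^{(1)}(t) = (\phi^i(t), \partial \phi^i/\partial t^A(t)) = (\psi^i(t), \psi^i_A(t)) = \psi(t)$, giving $\psi = \phi^{(1)}$. Substituting into the second block rewrites $\partial \psi^i_B/\partial t^A$ as $\partial^2 \phi^i/(\partial t^A \partial t^B)$ and yields exactly (\ref{sopde}). For the converse I run the same argument backwards: starting from a solution $\phi$ of (\ref{sopde}) and setting $\psi = \phi^{(1)}$, the defining relation of the prolongation is the first integral-section block verbatim, and differentiating $\psi^i_B = \partial \phi^i/\partial t^B$ once more in $t^A$ and plugging in (\ref{sopde}) produces the second block.

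There is no genuine obstacle here; the argument is a bookkeeping exercise rather than a difficulty. The only point worth flagging is that equality of mixed partials forces $(\Gamma_A)^i_B = (\Gamma_B)^i_A$ along any integral section, which is a compatibility condition that an integrable SOPDE must satisfy but does not affect either implication. The actual content of the proposition is that the SOPDE condition $\Gamma_A^i = v^i_A$ is precisely what converts the first-order integral-section system on $T_k^1 M$ into a genuine second-order system on $M$.
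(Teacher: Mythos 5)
Your proof is correct and is essentially the standard argument: the paper states this proposition without proof (it is recalled from the $k$-symplectic literature), and the expected justification is exactly your local-coordinate computation, splitting the integral-section condition $\psi_\ast(\partial/\partial t^A)=\Gamma_A(\psi(t))$ into the base block $\partial\psi^i/\partial t^A=\psi^i_A$ (forced by the SOPDE condition $\Gamma_A^i=v^i_A$) and the fibre block $\partial\psi^i_B/\partial t^A=(\Gamma_A)^i_B$, which together identify $\psi$ with $\phi^{(1)}$ and turn the fibre block into the second-order system; the converse reverses the substitution. Your side remark on the symmetry $(\Gamma_A)^i_B=(\Gamma_B)^i_A$ along integral sections as the compatibility condition for integrability is accurate and a useful observation, though not needed for either implication.
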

Let $L : T_k^1 M \rightarrow \mathbf{R}$ be a Lagrangian. The {\em  Euler-Lagrange equations} for $L$ are
\begin{equation} \label{eulerlagrangeeq}
\sum\limits_{A=1}^{k} \frac{\partial}{\partial t^A} (t)  \left( \frac{\partial L}{\partial v^i_A} ( \phi (t) ) \right) =
 \frac{\partial L}{\partial x^i} (\psi (t))  \, , \, \,
 v^i_A (\psi (t)) \frac{\partial \psi^i}{\partial t^A} (t)   \, ,
\end{equation}
whose solutions are maps $\psi : \mathbf{R}^k \rightarrow T_k^1 M$. We observe that  $\psi (t) =  \phi^{(1)} (t)$ for $\phi = \tau \circ \psi $.

We can introduce the forms associated to $L$, $(\theta_L)_A = dL \circ S^A \in \Omega^1 (T_k^1 M)$, $(\omega_L)_A = - d (\theta_L)_A \in \Omega^2 (T_k^1 M)$, and the {\it energy Lagrangian function} $E_L = \Delta (L) - L \in C^{\infty} (T_k^1 M)$. Locally,
$$  (\theta_A)_L = \frac{\partial L}{\partial v_A^i} dx^i \, , \  \, (\omega_A)_L = \frac{\partial^2 L}{\partial x^j \partial v_A^i} dx^i \wedge dx^j +
\frac{\partial^2 L}{\partial v_B^j \partial v_A^i} dx^i \wedge dv^j_B \, , \  \,  E_L = v_A^i \frac{\partial L}{\partial v_A^i} - L  \, .$$
The Lagrangian $L$ is {\it regular} if the matrix $\left( \frac{\partial^2 L}{\partial v^i_A \partial v^j_B} \right)$ is regular at every point of $T^1_k M$.\\
This is equivalent to say that $((\omega_L)_1, \ldots , (\omega_L)_k; V)$, $V=\ker \tau_{\ast}$, is a $k$-symplectic structure.\\
The family $\left( T^1_k M , (\omega_L)_A , E_L \right)$ is called a $k$-{\it symplectic Lagrangian system}.

Let $\mathcal{X}^k_L (T^1_k M)$ be the set of $k$-vector fields $\Gamma = (\Gamma_1 , \ldots , \Gamma_k)$ in $T^1_k M$ solutions to
\begin{equation}\label{hdwplusl}
\sum\limits_{A=1}^{k} i_{\Gamma_A} (\omega_L)_A = d E_L \, .
\end{equation}
Locally, if $\Gamma_A = (\Gamma_A)^i \frac{\partial}{\partial x^i}+ \left( \Gamma_A \right)^{i}_{B} \frac{\partial}{\partial v_B^i} $ and $L$ is regular, then $\Gamma$ is a solution of (\ref{hdwplusl}) if and only if
$$ \frac{\partial^2 L}{\partial x^j \partial v^i_A} v_A^j + \frac{\partial^2 L}{\partial v^i_A \partial v^j_B}  \left( \Gamma_A \right)^{j}_{B}=\frac{\partial L}{\partial x^i} \, , \, \  (\Gamma_A)^i = v_A^i \, .$$
Thus, if $\Gamma \in \mathcal{X}^k_L (T^1_k M)$, then it is a SOPDE and, if it is integrable, its integral sections are first prolongations of maps $\phi : \mathbf{R}^k \rightarrow M$ solution to the Euler-Lagrange equations (\ref{eulerlagrangeeq}).

Now, if we consider a regular Lagrangian on $T_k^1 M$, then we can rewrite the results from above for the Hamiltonian system $\left( T_{k}^{1}M,(\omega_{L})_{A},E_{L}\right) $ with Hamiltonian function $H=E_L=v^i_A \frac{\partial L}{\partial v^i_A} - L$.

In the classical case ($k=1$), let us recall that Cartan
symmetries induce and are induced by constants of motions
(conservation laws), and these results are known as Noether
Theorem and its converse (\cite{crampin}, \cite{crasmareanu},  \cite{jones}, \cite{noether}, \cite{olver}).

For the higher order case the problem was be solved by L. Bua,
I.Buc\u{a}taru and M. Salgado in \cite{modestobuc}. So, for $k >
1$  the Noether Theorem is also true, that is each Cartan symmetry
induces a conservation law (defined for a regular Lagrangian on
$T_k^1 M$, like in \cite{modestobuc}). However, the converse of
Noether Theorem may not be true and in \cite{modestobuc} is
provided some examples of conservation laws that are not induced
by Cartan symmetries.

In  \cite{modestobuc}, \cite{rrsv} it is introduced next definition for a conservation law of Euler-Lagrange equations (\ref{eulerlagrangeeq}) on $T^1_k M$:
\begin{definition} \rm (\cite{modestobuc})  A map  $\mathbf{\Phi }=\left( \Phi _{1},\ldots ,\Phi _{k}\right) : T^1_k M \longrightarrow \mathbf{R}^{k}$ is called \textbf{conservation law} for the Euler-Lagrange equations (\ref{eulerlagrangeeq}) if the divergence of
$$\mathbf{\Phi } \circ  \phi^{(1)} = \left( \Phi _{1} \circ  \phi^{(1)},\ldots ,\Phi _{k}\circ  \phi^{(1)} \right) : U\subset \mathbf{R}^{k}\rightarrow\mathbf{R}^{k}$$
is zero, for every $\phi  : U \subset \mathbf{R}^{k} \rightarrow M $ solutions of the Euler-Lagrange equations (\ref{eulerlagrangeeq}), that is
\begin{equation}
\sum\limits_{A=1}^{k} \frac{\partial \left( \Phi_A \circ  \phi^{(1)} \right) }{\partial t^A} (t) = 0 \, .
\label{lagrconslaw}
\end{equation}
\end{definition}
In \cite{modestobuc} was proved the next results:
\begin{proposition} \label{pcons1}
Let $\mathbf{\Phi}=\left( \Phi _{1},\ldots ,\Phi _{k}\right) : T^1_k M \longrightarrow \mathbf{R}^{k}$ be a conservation law for the Euler-Lagrange equations (\ref{eulerlagrangeeq}). If $\xi=(\xi_{1},\dots ,\xi_{k})$ is an integrable SOPDE which belongs to $\mathcal{X}_{L}^{k}(T_{k}^{1}M)$, then
\begin{equation}
\sum\limits_{A=1}^{k} L_{\xi_A} \Phi_A = 0 \, . \label{defcons2}
\end{equation}
\end{proposition}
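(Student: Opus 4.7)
The plan is to derive the identity pointwise on $T^1_k M$ by passing through an integral section of $\xi$ at an arbitrary base point, and to use the SOPDE/integrability hypotheses to convert the divergence condition (the definition of conservation law) into a Lie derivative identity.

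First, I would fix an arbitrary point $w\in T^1_k M$. Since $\xi=(\xi_1,\dots,\xi_k)$ is integrable, there exists an integral section $\psi:U_0\subset\mathbf{R}^k\to T^1_k M$ with $\psi(0)=w$ and $\psi_{\ast}(t)\bigl(\partial/\partial t^A\bigr)=\xi_A(\psi(t))$ for every $A$ and every $t\in U_0$. Because $\xi\in\mathcal{X}_L^k(T^1_k M)$ is a SOPDE, by the proposition on SOPDEs integrated earlier in the paper, $\psi$ is the first prolongation of its projection: $\psi=\phi^{(1)}$ with $\phi=\tau\circ\psi$, and $\phi$ is a solution of the Euler-Lagrange equations (\ref{eulerlagrangeeq}).

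Next, I would compute the divergence of $\mathbf{\Phi}\circ\phi^{(1)}=\mathbf{\Phi}\circ\psi$ at $t\in U_0$ by the chain rule. For each $A$,
\begin{equation*}
\frac{\partial(\Phi_A\circ\psi)}{\partial t^A}(t)
= d\Phi_A\bigl(\psi_{\ast}(t)(\partial/\partial t^A)\bigr)
= d\Phi_A\bigl(\xi_A(\psi(t))\bigr)
= (L_{\xi_A}\Phi_A)(\psi(t)).
\end{equation*}
Summing over $A$ and invoking the hypothesis that $\mathbf{\Phi}$ is a conservation law for the Euler-Lagrange equations, the left-hand side vanishes because $\phi$ is a solution, so
\begin{equation*}
\sum_{A=1}^{k} (L_{\xi_A}\Phi_A)(\psi(t)) = 0
\end{equation*}
for all $t\in U_0$. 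Evaluating at $t=0$ gives $\sum_A (L_{\xi_A}\Phi_A)(w)=0$, and since $w\in T^1_k M$ was arbitrary, the identity (\ref{defcons2}) holds globally.

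The only subtle step is the identification of $\psi$ with a first prolongation $\phi^{(1)}$ and of $\phi$ with a solution of (\ref{eulerlagrangeeq}); this is where the combined hypotheses ``$\xi$ is an integrable SOPDE'' and ``$\xi\in\mathcal{X}_L^k(T^1_k M)$'' are essential, since without the SOPDE property we could not invoke the earlier proposition to conclude $\psi=\phi^{(1)}$, and without $\xi\in\mathcal{X}_L^k$ (so that the local expression $(\Gamma_A)^i=v_A^i$ and the equation coming from (\ref{hdwplusl}) both hold) the projected curve $\phi$ would not satisfy (\ref{eulerlagrangeeq}). Everything else is a direct chain rule computation.
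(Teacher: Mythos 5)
Your proof is correct. Note that the paper itself states this proposition without proof, citing Bua--Buc\u{a}taru--Salgado \cite{modestobuc}; your argument is the standard one for this result and matches what the paper's setup supports: integrability gives an integral section $\psi$ through an arbitrary point, membership in $\mathcal{X}_L^k(T^1_kM)$ (together with the SOPDE property) lets you invoke the earlier proposition to write $\psi=\phi^{(1)}$ with $\phi$ a solution of the Euler--Lagrange equations, and the chain rule converts the vanishing divergence of $\mathbf{\Phi}\circ\phi^{(1)}$ into $\sum_{A}(L_{\xi_A}\Phi_A)(\psi(t))=0$, which at $t=0$ gives the identity at the chosen point.
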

The converse of  proposition \ref{pcons1} may not be true and the reason is that we might have solutions $\phi$ of the Euler-Lagrange equations (\ref{eulerlagrangeeq}) that are not solutions to some $\xi \in \mathcal{X}_{L}^{k}(T_{k}^{1}M)$.

However, under some assumption, the converse is true (\cite{modestobuc}):
\begin{proposition} \label{pcons2}
Let  $L$ be a regular Lagrangian on $T_k^1 M$ and assume that there exists a vector field $X \in \mathcal{X} (T_{k}^{1}M)$ such that
\begin{equation}
i_X (\omega_L)_A = d f_A, \forall 1 \leq A \leq k
\end{equation}
for some functions $f_A : T^1_k M \longrightarrow \mathbf{R}^{k}$.

Then $\mathbf{F}=(f_1, \ldots ,f_k)$ is a conservation law for the Euler-Lagrange equations (\ref{eulerlagrangeeq}) if and only if $\sum\limits_{A=1}^{k} \xi_A (f_A) = 0$, for all integrable SOPDE $\xi \in \mathcal{X}_{L}^{k}(T_{k}^{1}M)$.
\end{proposition}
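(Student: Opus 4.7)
The plan is to prove both directions of the equivalence. The forward direction is immediate from Proposition \ref{pcons1}: since each $f_A$ is a scalar function, $L_{\xi_A}f_A = \xi_A(f_A)$, and the conclusion of that proposition applied to $\mathbf{F}$ reads exactly $\sum_A \xi_A(f_A) = 0$ for every integrable $\xi \in \mathcal{X}_L^k(T_k^1 M)$.

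For the converse, the heart of the argument is one algebraic identity. Combining the defining equation $\sum_A i_{\xi_A}(\omega_L)_A = dE_L$ of $\mathcal{X}_L^k$ with the hypothesis $i_X(\omega_L)_A = df_A$ and the antisymmetry $i_\xi i_X \omega = -i_X i_\xi \omega$ valid on any $2$-form $\omega$, I would compute
\[
\sum_A \xi_A(f_A) \;=\; \sum_A i_{\xi_A} df_A \;=\; \sum_A i_{\xi_A} i_X (\omega_L)_A \;=\; -\,i_X \sum_A i_{\xi_A}(\omega_L)_A \;=\; -X(E_L),
\]
a pointwise equality of functions on $T_k^1 M$ which holds for any $\xi \in \mathcal{X}_L^k$, integrable or not. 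Since the regularity of $L$ guarantees the local existence of integrable SOPDEs in $\mathcal{X}_L^k$, the assumption $\sum_A \xi_A(f_A)=0$ for every integrable $\xi$ then forces $X(E_L)\equiv 0$.

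To finish, I would take any solution $\phi$ of the Euler-Lagrange equations and apply the chain rule:
\[
\sum_A \frac{\partial(f_A\circ\phi^{(1)})}{\partial t^A}(t) \;=\; \sum_A df_A\bigl(\phi^{(1)}(t)\bigr)\!\left(\tfrac{\partial\phi^{(1)}}{\partial t^A}(t)\right).
\]
By regularity of $L$, the Euler-Lagrange equations for $\phi$ are equivalent, at each point $\phi^{(1)}(t)$, to the pointwise relation $\sum_A i_{\partial_{t^A}\phi^{(1)}(t)}(\omega_L)_A = dE_L$; that is, the tangent vectors $\partial\phi^{(1)}/\partial t^A(t)$ play the role of values of an element of $\mathcal{X}_L^k$ at $\phi^{(1)}(t)$. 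Feeding this into the identity of the previous paragraph gives $-X(E_L)(\phi^{(1)}(t))=0$, which is exactly the defining equation of a conservation law.

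The main obstacle I anticipate is bridging the integrability assumption in the hypothesis with the pointwise conclusion needed along $\phi^{(1)}$, since $\phi^{(1)}$ need not be an integral section of any globally integrable element of $\mathcal{X}_L^k$. The resolution is precisely the algebraic nature of the identity $\sum_A \xi_A(f_A) = -X(E_L)$: integrability enters only to convert the hypothesis into the single function-level statement $X(E_L)=0$, after which the chain-rule computation is purely pointwise and applies to every Euler-Lagrange solution.
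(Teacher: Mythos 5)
Your overall architecture is the right one, and in fact the paper offers no proof of this proposition at all (it is quoted from \cite{modestobuc} with only a citation), so the argument must be judged on its own. The forward implication via Proposition \ref{pcons1} is correct, and your identity
\[
\sum_{A=1}^{k}\xi_A(f_A)\;=\;\sum_{A=1}^{k}(\omega_L)_A(X,\xi_A)\;=\;-\,i_X\Bigl(\sum_{A=1}^{k}i_{\xi_A}(\omega_L)_A\Bigr)\;=\;-X(E_L),
\]
valid pointwise for any $\xi\in\mathcal{X}^k_L(T^1_kM)$, is exactly the heart of the matter; so is the observation that, for regular $L$, the vectors $W_A=\partial\phi^{(1)}/\partial t^A(t)$ attached to an Euler--Lagrange solution $\phi$ satisfy the same algebraic relation $\sum_A i_{W_A}(\omega_L)_A=dE_L$ at $\phi^{(1)}(t)$ (the $dv$-components force the SOPDE condition by regularity, the $dx$-components are the Euler--Lagrange equations), so that the same computation yields $\sum_A\partial(f_A\circ\phi^{(1)})/\partial t^A=-X(E_L)\circ\phi^{(1)}$. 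Both computations check out, the sign coming from $i_\xi i_X\omega=-i_Xi_\xi\omega$ on $2$-forms.

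The gap is the sentence ``the regularity of $L$ guarantees the local existence of integrable SOPDEs in $\mathcal{X}^k_L$.'' Regularity guarantees that $\mathcal{X}^k_L(T^1_kM)$ is nonempty and that its elements are SOPDEs, but for $k>1$ integrability is the additional requirement that the components $\xi_1,\dots,\xi_k$ admit integral sections through every point (essentially that they commute), and there is no general existence theorem for integrable solutions of (\ref{hdwplusl}) through an arbitrary point; this is precisely why the paper and \cite{modestobuc} carry ``integrable'' as a standing hypothesis rather than an automatic property. Your converse needs $X(E_L)=0$ at every point of the form $\phi^{(1)}(t)$ with $\phi$ an Euler--Lagrange solution, whereas the hypothesis only delivers $X(E_L)=0$ on the union of domains of integrable elements of $\mathcal{X}^k_L$ --- and the paper itself warns that Euler--Lagrange solutions need not arise as integral sections of any such element. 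For $k=1$ the two sets coincide and your argument is complete; for $k>1$ you must either supply or cite a statement guaranteeing enough integrable SOPDEs to cover the relevant points, or acknowledge that the equivalence is only as strong as that existence assumption. Everything else in your proof is correct.
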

Using the notions from the previous section, we have:
\begin{definition} \rm (\cite{modestobuc})
A vector field $X \in \mathcal{X}(T_k^1 M)$ is called a Cartan
symmetry for the regular Lagrangian $L$, if $L_X (\omega_L)_A =0$,
for all $1 \leq A \leq k$ and $L_X E_L =0$.
\end{definition}
In this case the flow $\phi_t$ of $X$ transforms solutions of the Euler-Lagrange equations on solutions of the Euler-Lagrange equations, that is, each $\phi_t$ is a symmetry of the Euler-Lagrange equations (\cite{modestobuc}, \cite{rrsv}).

Let us remark that  for a Cartan symmetry of $L$,  $L_X (\omega_L)_A =0$  implies that, locally, we
have $i_X (\omega_L)_A = d f_A$, for all $1 \leq A \leq k$ (\cite{modestobuc}). So, if $X$ is a Cartan symmetry, then proposition \ref{pcons2}  holds locally.
\begin{theorem}(Noether Theorem) (\cite{modestobuc}) \label{thmn1}
Let $L$ be  a regular Lagrangian on $T_k^1 M$ and  $X \in \mathcal{X}(T_k^1 M)$  a Cartan symmetry for $L$.
Then, there exists (locally defined) functions $f_A$ on $T_k^1 M$ such that
\begin{equation}
L_X (\theta_L)_A = df_A \; \, , 1 \leq A \leq k \, , \label{nt1}
\end{equation}
and  the following functions
\begin{equation}
\Phi_A =  (\theta_L)_A (X) - f_A \; \, , 1 \leq A \leq k \, ,
\label{nt2}
\end{equation}
give a conservation law for the Euler-Lagrange equations
associated to $L$, i.e. for any integrable evolution $k$-vector
field (defined locally) associated to $H=E_L$, the energy of $L$.
\end{theorem}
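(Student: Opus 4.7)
The plan is to deduce the theorem by combining the Cartan symmetry identities with Proposition \ref{pcons2}, following closely the strategy of the classical ($k=1$) Noether theorem. The whole argument is essentially a computation in Cartan calculus, with the key input being that the two defining equations of a Cartan symmetry — $L_X(\omega_L)_A=0$ and $L_XE_L=0$ — precisely match the two obstructions one meets when trying to exhibit $\Phi_A$ as a conserved current.

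First I would produce the functions $f_A$ in (\ref{nt1}). Since $(\omega_L)_A = -d(\theta_L)_A$ and $L_X$ commutes with $d$, the Cartan symmetry condition $L_X(\omega_L)_A=0$ gives $d\bigl(L_X(\theta_L)_A\bigr)=0$. Hence each $1$-form $L_X(\theta_L)_A$ is closed, and the Poincaré lemma yields local functions $f_A$ on a neighbourhood $U_p$ of any point with $L_X(\theta_L)_A = df_A$, unique up to a constant. This is exactly (\ref{nt1}).

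Next I would rewrite the Cartan symmetry data as a statement of the form required by Proposition \ref{pcons2}, namely that $i_X(\omega_L)_A$ is exact. Applying Cartan's magic formula,
\begin{equation*}
L_X(\theta_L)_A = i_X d(\theta_L)_A + d\bigl(i_X(\theta_L)_A\bigr) = -i_X(\omega_L)_A + d\bigl((\theta_L)_A(X)\bigr),
\end{equation*}
so combining with $L_X(\theta_L)_A=df_A$ one gets
\begin{equation*}
i_X(\omega_L)_A \;=\; d\bigl((\theta_L)_A(X)-f_A\bigr) \;=\; d\Phi_A,
\end{equation*}
with $\Phi_A$ as in (\ref{nt2}). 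Thus the hypotheses of Proposition \ref{pcons2} are satisfied on $U_p$ with the functions $\Phi_A$ in place of $f_A$.

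Finally I would verify the divergence condition $\sum_A \xi_A(\Phi_A)=0$ for every integrable SOPDE $\xi=(\xi_1,\ldots,\xi_k)\in\mathcal{X}_L^k(T_k^1M)$. Using $d\Phi_A = i_X(\omega_L)_A$, the antisymmetry of the $(\omega_L)_A$ and the defining equation (\ref{hdwplusl}), one computes
\begin{equation*}
\sum_{A=1}^{k}\xi_A(\Phi_A) = \sum_{A=1}^{k} i_{\xi_A}\,i_X(\omega_L)_A = -\,i_X\Bigl(\sum_{A=1}^{k} i_{\xi_A}(\omega_L)_A\Bigr) = -\,i_X\, dE_L = -L_X E_L = 0,
\end{equation*}
where the last equality uses the second Cartan condition $L_X E_L=0$. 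Proposition \ref{pcons2} then immediately concludes that $\mathbf{\Phi}=(\Phi_1,\ldots,\Phi_k)$ is a conservation law for the Euler–Lagrange equations. The only step that is not purely formal is ensuring that Proposition \ref{pcons2} is applicable, which is why the Cartan symmetry hypothesis is used twice — once to produce the exactness of $i_X(\omega_L)_A$ and once to kill the contraction with $dE_L$; everything else is bookkeeping with Cartan's formula. No genuine obstacle arises beyond keeping signs and indices straight.
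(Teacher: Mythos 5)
Your argument is correct and follows exactly the route the paper itself sets up: the paper gives no proof of Theorem \ref{thmn1} (it is quoted from the cited reference), but the remark immediately preceding it — that a Cartan symmetry satisfies $i_X(\omega_L)_A=df_A$ locally, so that Proposition \ref{pcons2} applies locally — is precisely your strategy of obtaining $i_X(\omega_L)_A=d\Phi_A$ via Cartan's formula and then killing $\sum_A i_{\xi_A}i_X(\omega_L)_A=-i_X\,dE_L$ with the second Cartan condition. The only point worth making explicit is that Proposition \ref{pcons2} is stated for globally defined potentials while your $\Phi_A$ live on a neighbourhood $U_p$, which is harmless here since the theorem only claims a locally defined conservation law.
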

Next result show when a conservation law for a Lagrangian induces
and are induced by a Cartan symmetry.
\begin{theorem}(\cite{modestobuc}) Let be $L$ a regular Lagrangian on $T_k^1
M$, the functions $f_A \in C^{\infty} (T_k^1 M)$, $1 \leq A \leq
k$, and a vector field $X \in \mathcal{X}(T_k^1 M)$ such that
\begin{equation}
i_X (\omega_L)_A = d f_A \, \, , 1 \leq A \leq k \, . \label{nt3}
\end{equation}
Then $F=(f_1, \ldots , f_k)$ is a conservation law for $L$ if and
only if $X$ is a Cartan symmetry.
\end{theorem}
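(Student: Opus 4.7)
The plan is to reduce both sides of the equivalence to the single condition $L_X E_L=0$, and to leverage Proposition \ref{pcons2} together with the Cartan magic formula to do so.

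First I would observe that the hypothesis $i_X(\omega_L)_A=df_A$, combined with the fact that $(\omega_L)_A=-d(\theta_L)_A$ is closed, forces $L_X(\omega_L)_A=0$ automatically via Cartan's magic formula $L_X(\omega_L)_A = i_X d(\omega_L)_A + d\,i_X(\omega_L)_A = 0 + d(df_A)=0$ for every $A$. Consequently, among the two conditions that define a Cartan symmetry, the first one is free, and $X$ is a Cartan symmetry for $L$ if and only if $L_X E_L=0$.

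Next I would analyze the conservation law side. By Proposition \ref{pcons2} (whose hypothesis is precisely $i_X(\omega_L)_A=df_A$), the tuple $F=(f_1,\ldots,f_k)$ is a conservation law for the Euler--Lagrange equations if and only if $\sum_{A=1}^{k}\xi_A(f_A)=0$ for every integrable SOPDE $\xi=(\xi_1,\ldots,\xi_k)\in\mathcal{X}_L^k(T_k^1 M)$. The key computation is then
\begin{equation*}
\sum_{A=1}^{k}\xi_A(f_A)=\sum_{A=1}^{k} i_{\xi_A}df_A = \sum_{A=1}^{k}(\omega_L)_A(X,\xi_A) = -\,i_X\!\left(\sum_{A=1}^{k} i_{\xi_A}(\omega_L)_A\right) = -\,i_X dE_L = -L_X E_L,
\end{equation*}
where in the second equality I use the hypothesis $i_X(\omega_L)_A=df_A$, and in the second-to-last equality the defining relation \eqref{hdwplusl} for $\xi\in\mathcal{X}_L^k(T_k^1 M)$. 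A notable by-product of this identity is that $\sum_A \xi_A(f_A)$ does not depend on the choice of the integrable solution $\xi$, which is consistent with Proposition \ref{pcons2}.

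Putting the two steps together, $F$ is a conservation law iff $L_X E_L=0$, while $X$ is a Cartan symmetry iff $L_X E_L=0$, so the two conditions coincide and the theorem follows in both directions. I do not expect any serious obstacle: the only subtlety is to make sure the computation above is applied to an actual integrable SOPDE in $\mathcal{X}_L^k(T_k^1 M)$ so that Proposition \ref{pcons2} is applicable, and to invoke the regularity of $L$ only implicitly through the existence of such a SOPDE and through the $k$-symplectic character of $((\omega_L)_A;V)$ used in the statement of Proposition \ref{pcons2}.
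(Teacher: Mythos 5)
The paper does not actually prove this theorem; it is quoted verbatim from \cite{modestobuc}, so there is no in-paper argument to compare against. Your proof is correct and is essentially the standard one for this result: the reduction of the Cartan-symmetry condition to $L_XE_L=0$ via the magic formula (since $(\omega_L)_A=-d(\theta_L)_A$ is closed and $d\,i_X(\omega_L)_A=d(df_A)=0$), and the chain $\sum_A\xi_A(f_A)=\sum_A(\omega_L)_A(X,\xi_A)=-i_X\,dE_L=-L_XE_L$ using the defining equation \eqref{hdwplusl}, are exactly the right two halves, and Proposition \ref{pcons2} glues them together. The one point worth making explicit, which you flag but do not resolve, is that the direction ``$F$ conservation law $\Rightarrow$ $X$ Cartan'' needs at least one \emph{integrable} SOPDE in $\mathcal{X}_L^k(T_k^1M)$ to exist, so that the vanishing of $\sum_A\xi_A(f_A)$ over that (possibly empty) family actually forces $L_XE_L=0$; this existence issue is inherited from Proposition \ref{pcons2} and is glossed over in the source as well, so it is not a defect specific to your argument, but a careful write-up should state it as a standing assumption.
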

\begin{example}\rm  (\cite{modestobuc}, \cite{muntsalg}) The equation of motion of a vibrating string  is
\begin{equation}
\sigma \frac{\partial ^{2}\phi }{\partial (t^{1})^{2}}-\tau \frac{\partial
^{2}\phi }{\partial (t^{2})^{2}}=0,  \label{ex2}
\end{equation}
where $\sigma $ and $\tau $ are certain constants of the mechanical system.

The equation (\ref{ex2}) can be described as the
generalized Euler-Lagrange equations associated to a Lagrangian $L(x,v_{1},v_{2}) = \frac{1}{2}(\sigma v_{1}^{2}-\tau v_{2}^{2})$ defined
on the jet bundle $T_{k}^{1}M$ with $M=\mathbf{R}$ and $k=2$. Since $L$ is regular there exists a $k$-symplectic structure $((\omega
_{L})_{1},(\omega _{L})_{2})$, associated to $L$, given in local coordinates
by $(\omega _{L})_{1}=\sigma dv_{1}\wedge dx$, $(\omega _{L})_{1}=-\tau
dv_{2}\wedge dx$. The energy $E_{L}=\Delta(L)-L$ is locally given by $E_{L}=\frac{%
1}{2}(\sigma v_{1}^{2}-\tau v_{2}^{2})$ and $\ dE_{L}=\sigma
v_{1}dv_{1}-\tau v_{2}dv_{2}$.

Let us suppose that there exists $\xi = (\xi _{1},\xi _{2})  \in T_{2}^{1}(T_{2}^{1} \mathbf{R})$ a solution of the equation
\begin{equation}
i_{X_{1}}(\omega_{L})_{1}+i_{X_{2}}(\omega_{L})_{2}=dE_{L} \, .  \label{ex2p}
\end{equation}
Then,  $(\xi _{1},\xi _{2})$ is
a SOPDE and, locally, $\xi _{A}=v_{A}\frac{\partial }{\partial x}
+(\xi _{A})_{1}\frac{\partial }{\partial v_{1}}+(\xi _{A})_{2}\frac{\partial
}{\partial v_{2}} \,$, $A=1,2$. So, we have $\sigma (\xi _{1})_{1}-\tau (\xi
_{2})_{2}=0$ and  if we consider $\phi :\mathbf{R}^{2}\rightarrow \mathbf{R}$, $\phi =\phi (t^{1},t^{2})$, a solution of $\xi =(\xi _{1},\xi _{2})$, then
we obtain $ 0=\sigma (\xi _{1})_{1}-\tau (\xi _{2})_{2}=\sigma \frac{\partial ^{2}\phi }{
\partial (t^{1})^{2}}-\tau \frac{\partial ^{2}\phi }{\partial (t^{2})^{2}}$. Thus, the equation (\ref{ex2p}) is a geometric version for the equations (\ref{ex2}). An example of an integrable SOPDE solution $\xi =(\xi _{1},\xi
_{2})$ of (\ref{ex2}) is given by (\cite{modestobuc})
$$
\xi_1 = v_1 \frac{\partial}{\partial x}+ \tau (\sigma (v_1)^2 +
\tau (v_2)^2) \frac{\partial}{\partial v_1} + 2 \sigma \tau v_1
v_2 \frac{\partial}{\partial v_2} \, ,$$ $$ \xi_2 = v_2
\frac{\partial}{\partial x} + 2 \sigma \tau v_1 v_2
\frac{\partial}{\partial v_1} + \sigma (\sigma (v_1)^2 + \tau
(v_2)^2) \frac{\partial}{\partial v_2} \, .$$ Thus any solution
$\phi$ of the SOPDE $\xi =(\xi _{1},\xi _{2})$ in the formulae
above is a solution of the vibrating string equation (\ref{ex2}). The following two functions $\Phi_1, \Phi_2 : T_{2}^{1}\mathbf{R}\rightarrow \mathbf{R}$,
\begin{equation}
\Phi_1 (v_1, v_2) = -2 \sigma v_1 v_2 \, , \hspace{1cm} \Phi_2
(v_1, v_2) = \sigma (v_1)^2 + \tau (v_2)^2 \label{conslaw1}
\end{equation}
give a conservation law $\Phi = (\Phi_1, \Phi_2)$ for every
evolution $2$-vector field associated with the Hamiltonian $E_L$. We can say that $\Phi = (\Phi_1, \Phi_2)$ is a
conservation law for the Euler-Lagrange equations (\ref{ex2}) of
the vibrating string, or $\Phi = (\Phi_1, \Phi_2)$ give a
conservation law for an integrable evolution $k$-vector field
associated  to $H=E_L$. More that, this conservation law is not induced by
  a Cartan symmetry, and hence it will show that the converse of the Noether Theorem \ref{thmn1} is not true, unless
   the assumptions (\ref{nt3})  are satisfied (\cite{modestobuc}).
\end{example}
\section{New kinds of conservation laws for $k$-symplectic  systems}
In this section we will present a result which allow us to obtain
new kinds of conservation laws (nonclassical) for $k$-symplectic Hamiltonian and Lagrangian systems, without the help of a Noether type theorem and without
the use of a variational principle, using only symmetries and
pseudosymmetries associated to the $k$-vector fileds
$\mathbf{X}=(X_{1},\dots ,X_{k})$ which are solutions of the
equation $\sum\limits_{A=1}^{k}i_{X_{A}}\omega _{A}=dH$
(\cite{muntfliasi}). This result is a generalization from the
classical case of a results of G.L. Jones (\cite{jones})
and M. Cr\^{a}\c{s}m\u{a}reanu (\cite{crasmareanu}).

Let $\mathcal{M}$ be a manifold and $\mathbf{X}=(X_{1},\dots ,X_{k})$ be a $k$-vector field on $\mathcal{M}$.
\begin{definition} \rm The map $\mathbf{\Phi }=\left( \Phi _{1},\ldots ,\Phi _{k}\right)
: \mathcal{M} \longrightarrow \mathbf{R}^{k}$ is called {\em conservation
law} for  $\mathbf{X}=(X_{1},\dots ,X_{k})$   if
\begin{equation}
\sum\limits_{A=1}^{k} L_{X_{A}} \Phi_{A}=0.  \label{defcons}
\end{equation}
\end{definition}
\begin{definition} \rm
A vector field $Y$ on $\mathcal{M}$ is called {\em symmetry} or {\em dynamical symmetry} of the $k$-vector field $\mathbf{X}=(X_{1},\dots ,X_{k})$ if
\begin{equation}
L_{X_{A}}Y  = 0, \forall \quad A=1,\ldots ,k.  \label{defsym}
\end{equation}
\end{definition}
\begin{definition} \rm
A vector field $Y$ on $\mathcal{M}$ is called {\em pseudosymmetry} or {\em dynamical pseudosymmetry} of the $k$-vector field $\mathbf{X}=(X_{1},\dots ,X_{k})$ if,  for all $A=1,\ldots ,k$, there are functions $\lambda_A^B \in C^{\infty }(\mathcal{M})$, $B=1,\ldots , k$,  such that
\begin{equation}
L_{X_{A}}Y = \sum\limits_{B=1}^{k} \lambda_A^B X_B \, .   \label{defpseudosym}
\end{equation}
\end{definition}
This definitions is according to Krupkov\'{a} definition for symmetry of a distribution (\cite{krupkova}). Furthermore, we can give a generalization of this notion:
\begin{definition} \rm
If we fixed a $k$-vector field $\mathbf{Z}=(Z_{1},\dots ,Z_{k})$ on $\mathcal{M}$,
then a vector field $Y$ on $\mathcal{M}$ is called $\mathbf{Z}$-{\em pseudosymmetry} of the $k$-vector field $\mathbf{X}=(X_{1},\dots ,X_{k})$ if, for
all $A=1,\ldots ,k$, there are functions $\lambda_A^B \in C^{\infty }(\mathcal{M})$, $B=1,\ldots , k$,  such that
\begin{equation}
L_{X_{A}}Y = \sum\limits_{B=1}^{k} \lambda_A^B Z_B \, .   \label{defZpseudosym}
\end{equation}
\end{definition}
Obviously, a $X$-pseudosymmetry of $\mathbf{X}=(X_{1},\dots ,X_{k})$ is a pseudosymmetry of $\mathbf{X}$ and a $\mathbf{O}$-pseudosymmetry of  $\mathbf{X}$ is a symmetry for $\mathbf{X}$.

Next, we will present the main result of the paper, which allow us
to obtain new kinds of conservation laws for $k$-symplectic
Hamiltonian and Lagrangian systems, without the help of a Noether type theorem
and without the use of a variational principle.
\begin{theorem}
Let be $\mathbf{X}=(X_{1},\dots ,X_{k})$ a $k$-vector field on $\mathcal{M}$
and $( \omega _{1}, \ldots , \omega _{k} )$ be a family of
$p$-forms on $\mathcal{M}$, invariant for $\mathbf{X}$, i.e. $L_{X_{A}}\omega _{A}=0$, for all $%
A=1,\ldots ,k$. Let $Y$ be a symmetry of $\mathbf{X}$ and the $k$-vector field $\mathbf{Y}=(Y, \ldots , Y)$. If we have $p-1$ vector fields on $\mathcal{M}$, $S_1$, ..., $S_{p-1}$, which are $\mathbf{Y}$-pseudosymmetries of $\mathbf{X}$, then
\begin{equation}
\mathbf{\Phi }=\left( \Phi _{1},\ldots ,\Phi _{k}\right) ,
\label{lcons}
\end{equation}
is a conservation law for $\mathbf{X}=(X_{1},\dots ,X_{k})$,
where $\Phi _{A}=\omega _{A}\left( S_{1},\ldots
,S_{p-1},Y \right) $, $A=1,\ldots ,k \,$,
or locally,
\begin{equation}
\Phi _{A}=\left(
S_{1}\right) ^{i_{1}}\cdots \left( S_{p-1}\right)
^{i_{p-1}}\left( Y \right) ^{i_{p}} \left( \omega_{A} \right)_{i_{1}\ldots
i_{p-1}i_{p}} \, .
\end{equation}
Particularly, if $Y, S_{1}$, ...,
$S_{p-1}$ are symmetries for $\mathbf{X}$ then
$\mathbf{\Phi }$ given by (\ref{lcons}) is a conservation law
for $\mathbf{X}$.
\end{theorem}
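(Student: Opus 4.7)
The plan is to show that each $L_{X_A}\Phi_A$ vanishes individually, which is strictly stronger than the required $\sum_A L_{X_A}\Phi_A=0$. First I would fix $A\in\{1,\dots,k\}$ and apply the standard Leibniz rule for the Lie derivative of a $p$-form evaluated on $p$ vector fields:
\[
L_{X_A}\bigl[\omega_A(S_1,\dots,S_{p-1},Y)\bigr]
= (L_{X_A}\omega_A)(S_1,\dots,S_{p-1},Y)
+ \sum_{j=1}^{p-1}\omega_A(S_1,\dots,L_{X_A}S_j,\dots,S_{p-1},Y)
+ \omega_A(S_1,\dots,S_{p-1},L_{X_A}Y).
\]
The first summand vanishes by the invariance hypothesis $L_{X_A}\omega_A=0$, and the last summand vanishes because $Y$ is a symmetry of $\mathbf{X}$, so $L_{X_A}Y=0$. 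Everything then reduces to the $p-1$ middle terms.

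The heart of the argument is the way the $\mathbf{Y}$-pseudosymmetry hypothesis interacts with the diagonal shape $\mathbf{Y}=(Y,\dots,Y)$. For each $j$ the hypothesis provides functions $\lambda_A^{B,j}\in C^\infty(\mathcal{M})$ with $L_{X_A}S_j=\sum_{B=1}^k\lambda_A^{B,j}Y_B$. Because every component $Y_B$ equals $Y$, this collapses to $L_{X_A}S_j=\mu_A^j\,Y$, where $\mu_A^j=\sum_B\lambda_A^{B,j}$. Substituting, the $j$-th middle term becomes $\mu_A^j\,\omega_A(S_1,\dots,Y,\dots,S_{p-1},Y)$, with $Y$ sitting in both slot $j$ and the last slot. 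Since $\omega_A$ is an alternating $p$-form, repetition of the same argument forces this to vanish. Hence $L_{X_A}\Phi_A=0$ for every $A$, and summing over $A$ gives the conservation law.

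The particular case is immediate: if $Y,S_1,\dots,S_{p-1}$ are all symmetries for $\mathbf{X}$, then $L_{X_A}S_j=0$, so each middle term vanishes directly, without invoking the antisymmetry of $\omega_A$.

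If I had to isolate a main obstacle, it is not computational but conceptual: one must recognize why the hypothesis has to be $\mathbf{Y}=(Y,\dots,Y)$ with all components equal, rather than a generic auxiliary $k$-vector field $\mathbf{Z}$. It is precisely this diagonal form that collapses the pseudosymmetry identity to a single multiple of $Y$, so that the antisymmetry of $\omega_A$ (together with $Y$ already occupying the last slot of $\Phi_A$) can kill the middle contributions. Without this compatibility the cancellation fails, and this is exactly the feature that makes the theorem a clean $k$-symplectic generalization of the classical ($k=1$) Theorem~\ref{thmgen}.
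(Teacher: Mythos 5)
Your proposal is correct and follows essentially the same route as the paper's proof: the paper carries out the identical computation in local components, expanding $\sum_A L_{X_A}\Phi_A$ by the Leibniz rule, killing the $\omega_A$-term by invariance and the $Y$-term by the symmetry hypothesis, and using the diagonal form of $\mathbf{Y}$ together with the antisymmetry of $\omega_A$ to annihilate the pseudosymmetry terms. The only (cosmetic) difference is that you phrase the Leibniz expansion intrinsically and observe explicitly that each $L_{X_A}\Phi_A$ vanishes separately, which is also implicit in the paper's term-by-term cancellation.
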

\begin{proof} Applying the properties of the Lie derivative
and taking into account that, locally, $\Phi _{A}=\left(
S_{1}\right) ^{i_{1}}\cdots \left( S_{p-1}\right)
^{i_{p-1}}\left( Y \right) ^{i_{p}} \left( \omega_{A} \right)_{i_{1}\ldots
i_{p-1}i_{p}}$, for all $A=1,\ldots ,k$, we obtain that

$\sum\limits_{A=1}^{k}L_{X_{A}}\Phi
_{A}=\sum\limits_{A=1}^{k}L_{X_{A}}\left( \left(
S_{1}\right) ^{i_{1}}\cdots \left( S_{p-1}\right)
^{i_{p-1}}\left( Y \right)^{i_{p}} \left( \omega_{A} \right)_{i_{1}\ldots
i_{p-1}i_{p}} \right) =$

$=\sum\limits_{A=1}^{k}\left( L_{X_{A}}S_{1}\right)
^{i_{1}}\left( S_{2}\right) ^{i_{2}}\cdots \left(
S_{p-1}\right) ^{i_{p-1}}\left( Y \right)^{i_{p}}\left(
\omega _{A}\right) _{i_{1}\ldots i_{p-1}i_{p}}+\cdots +$

$+\sum\limits_{A=1}^{k}\left( S_{1}\right) ^{i_{1}}\cdots
\left( S_{p-2}\right)^{i_{p-2}}\left(
L_{X_{A}}S_{p-1}\right) ^{i_{p-1}}\left( Y \right)
^{i_{p}}\left( \omega _{A}\right) _{i_{1}\ldots i_{p-1}i_{p}}+$

$+\sum\limits_{A=1}^{k}\left( S_{1}\right) ^{i_{1}}\cdots
\left( S_{p-1}\right) ^{i_{p-1}}\left( L_{X_{A}}Y \right)
^{i_{p}}\left( \omega _{A}\right) _{i_{1}\ldots i_{p-1}i_{p}}+$

$+\sum\limits_{A=1}^{k}\left( S_{1}\right) ^{i_{1}}\cdots
\left( S_{p-1}\right) ^{i_{p-1}}\left( Y \right)
^{i_{p}}\left( L_{X_{A}}\omega _{A}\right) _{i_{1}\ldots
i_{p-1}i_{p}}=$

$=\sum\limits_{A=1}^{k}  \left( \sum\limits_{B=1}^{k} (\lambda_1)_A^B \right) \left( Y \right) ^{i_{1}}\left(
S_{2}\right) ^{i_{2}}\cdots \left( S_{p-1}\right)
^{i_{p-1}}\left( Y  \right)^{i_{p}}\left( \omega _{A} \right)
_{i_{1}\ldots i_{p-1}i_{p}}+\cdots $

$+\sum\limits_{A=1}^{k}\left( S_{1}\right) ^{i_{1}}\cdots
\left( S_{p-2}\right) ^{i_{p-2}} \left( \sum\limits_{B=1}^{k} (\lambda_{p-1})_A^B \right) \left( Y \right)
^{i_{p-1}}\left(
Y \right) ^{i_{p}}\left( \omega _{A}\right) _{i_{1}\ldots i_{p-1}i_{p}}=0$,\\ because $L_{X_{A}}Y =0$, $L_{X_{A}}\omega_{A}=0$ and taking
into account of the antisymmetry of $\omega _{A}$.
\end{proof}
As an immediate consequence of the previous theorem, we have the
result:
\begin{theorem}
\bigskip Let $(\mathcal{M},\omega _{A},V;1\leq A\leq k)$ be a $k$-symplectic manifold
and $H:\mathcal{M}\longrightarrow \mathbb{R}$ be a function on $\mathcal{M}$. Let $\mathbf{X}
=(X_{1},\dots ,X_{k})$ be an integrable evolution $k$-vector field
associated to $H$, i.e. $\mathbf{X} \in \mathcal{X}_{H}^{k}( \mathcal{M} )$. Let us suppose that $L_{X_{A}}\omega _{A}=0$, for all $A=1,\ldots ,k$. Let $Y$ be a symmetry of $\mathbf{X}$ and the $k$-vector field $\mathbf{Y}=(Y, \ldots , Y)$. If we have a vector field $S$ on $\mathcal{M}$  which is a $\mathbf{Y}$-pseudosymmetry of $\mathbf{X}$, then
\begin{equation*}
\mathbf{\Phi }=\left( \Phi _{1},\ldots ,\Phi _{k}\right) ,
\end{equation*}
is a conservation law for $\mathbf{X}=(X_{1},\dots ,X_{k})$, where
$\Phi_{A} = \omega_{A}\left( S, Y \right) $, for all $A=1,\ldots ,k$.

Particularly, if $Y$, $S$ are symmetries for $\mathbf{X}$ then $\mathbf{\Phi }$ is a conservation law for $\mathbf{X}=(X_{1},\dots ,X_{k})$.
\end{theorem}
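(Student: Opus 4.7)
The plan is to obtain this result as the special case $p=2$ of the preceding theorem, since the canonical $k$-symplectic forms $\omega_A$ are $2$-forms. First I would check that all the hypotheses of the previous theorem are in place: the $\omega_A$ form a family of $p$-forms on $\mathcal{M}$ with $p=2$; they are invariant for $\mathbf{X}$ because the assumption $L_{X_A}\omega_A=0$ for all $A$ is explicitly given; $Y$ is a symmetry of $\mathbf{X}$; and the vector field $S$ is a $\mathbf{Y}$-pseudosymmetry of $\mathbf{X}$, which is exactly the kind of object the previous theorem takes for $S_1,\ldots,S_{p-1}$, here reducing to a single $S_1=S$.

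Having verified the hypotheses, I would simply invoke the preceding theorem: its conclusion yields the conservation law $\mathbf{\Phi}=(\Phi_1,\ldots,\Phi_k)$ with
\[
\Phi_A \;=\; \omega_A(S_1,\ldots,S_{p-1},Y) \;=\; \omega_A(S,Y), \qquad 1\leq A\leq k,
\]
which is precisely the formula stated in the theorem. Thus $\sum_{A=1}^{k} L_{X_A}\Phi_A=0$, which is the defining property of a conservation law for $\mathbf{X}$ given in the earlier definition.

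For the particular case where both $Y$ and $S$ are symmetries of $\mathbf{X}$, I would observe that a symmetry is simply a $\mathbf{Y}$-pseudosymmetry with all $\lambda_A^B=0$ (equivalently, an $\mathbf{O}$-pseudosymmetry, as noted in the text right after the definition of $\mathbf{Z}$-pseudosymmetry). Hence the same application of the previous theorem goes through without any modification, yielding the conservation law $\Phi_A=\omega_A(S,Y)$.

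There is no real obstacle here: the whole content of the previous theorem was designed to accommodate precisely this situation, and the role of the $k$-symplectic structure is only to supply a distinguished family of invariant $2$-forms together with integrable evolution $k$-vector fields $\mathbf{X}\in\mathcal{X}_{H}^{k}(\mathcal{M})$. The only small point worth flagging is that integrability of $\mathbf{X}$ is not used in the algebraic identity $\sum_A L_{X_A}\Phi_A=0$, so the conclusion holds at the level of the defining equation \eqref{defcons}; integrability only matters if one wants to pass from this identity to the divergence-free statement along actual solutions via Propositions \ref{phcons1} and \ref{pcons1}.
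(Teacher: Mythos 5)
Your proposal is correct and matches the paper exactly: the paper presents this theorem as ``an immediate consequence of the previous theorem,'' i.e.\ the case $p=2$ with $S_1=S$, which is precisely your argument. Your closing remark that integrability of $\mathbf{X}$ is not needed for the algebraic identity $\sum_{A}L_{X_A}\Phi_A=0$ is a valid and worthwhile observation, though the paper does not make it explicit.
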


\begin{remark}\rm
a) Obviously, for any $k$-vector field $\mathbf{X} \in \mathcal{X}_{H}^{k}(\mathcal{M})$, using (\ref{hdwplus}), we have
$\sum\limits_{A=1}^{k}\,L_{X_{A}}\omega _{A}=0$. But, for our purpose we need  $L_{X_{A}}\omega _{A}=0$, $A=1,\ldots ,k$.

b) The Hamiltonian function $H$ is not a conservation law for an
integrable evolution $k$-vector field $\mathbf{X}=(X_{1},\dots ,X_{k}) \in \mathcal{X}_{H}^{k}(\mathcal{M})$. Neither the map
$\mathbf{H}=\left( H,\ldots ,H\right) :\mathcal{M} \longrightarrow
\mathbf{R}^{k}$ is not a conservation
law for any integrable evolution $k$-vector field $\mathbf{X} \in \mathcal{X}_{H}^{k}(\mathcal{M})$.
\end{remark}
Now, using this last result we can obtain new kinds of
conservation laws for $k$-symplectic Hamiltonian systems and
$k$-symplectic Lagrangian systems.
\begin{corollary}
\bigskip Let $\left( (T_{k}^{1})^{\ast }M,(\omega _{0})_{A},H\right) $\ be a
$k$-symplectic Hamiltonian system and $\mathbf{X}=(X_{1},\dots
,X_{k})$ be
an integrable evolution $k$-vector field associated to $H$, i.e. $\mathbf{
X} \in \mathcal{X}_{H}^{k}((T_{k}^{1})^{\ast }M)$. Let us suppose that $L_{X_{A}}(\omega _{0})_{A}=0$, for all $A=1,\ldots ,k$. Let $Y \in \mathcal{X} ((T_{k}^{1})^{\ast }M)$ be a symmetry of $\mathbf{X}$ and the $k$-vector field $\mathbf{Y}=(Y, \ldots , Y)$ on $(T_{k}^{1})^{\ast }M$. If we have a vector field $S$ on $(T_{k}^{1})^{\ast }M$  which is a $\mathbf{Y}$-pseudosymmetry of $\mathbf{X}$, then
\begin{equation*}
\mathbf{\Phi }=\left( \Phi _{1},\ldots ,\Phi _{k}\right) ,
\end{equation*}%
is a conservation law for $\mathbf{X}=(X_{1},\dots ,X_{k})$, where
$\Phi _{A}=(\omega _{0})_{A}\left( S, Y \right) $, for all  $A=1,\ldots ,k$.

Particularly, if $\mathbf{Y}$, $\mathbf{S}$ are symmetries for
$\mathbf{X}$
then $\mathbf{\Phi }$  is a conservation law for $\mathbf{X}=(X_{1},\dots ,X_{k})$.
\end{corollary}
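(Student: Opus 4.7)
The plan is to recognize the corollary as a direct specialization of the preceding theorem to the $k$-symplectic Hamiltonian setting, with the invariant forms being the $2$-forms $(\omega_0)_A$ (so $p = 2$) and with exactly one pseudosymmetry.

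First I would set $\mathcal{M} := (T_k^1)^{\ast} M$ and choose as the family of invariant $p$-forms from the preceding theorem the components $(\omega_0)_1, \ldots, (\omega_0)_k$ of the canonical $k$-symplectic structure. Because each $(\omega_0)_A$ is a $2$-form, the parameter $p$ in that theorem equals $2$, so the theorem demands $p - 1 = 1$ vector field that is a $\mathbf{Y}$-pseudosymmetry of $\mathbf{X}$; this role is played by $S$.

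Next I would verify that all hypotheses of the preceding theorem are met in this specialization: the invariance condition $L_{X_A} \omega_A = 0$ coincides with the standing assumption $L_{X_A}(\omega_0)_A = 0$; the vector field $Y$ is a symmetry of $\mathbf{X}$ by hypothesis; and $S$ is the required $\mathbf{Y}$-pseudosymmetry. Applying the preceding theorem with $S_1 := S$ then yields immediately that
\[
\Phi_A \;=\; \omega_A(S_1, Y) \;=\; (\omega_0)_A(S, Y), \qquad A = 1, \ldots, k,
\]
assemble into a conservation law $\mathbf{\Phi} = (\Phi_1, \ldots, \Phi_k)$ for $\mathbf{X}$, i.e.\ $\sum_{A=1}^{k} L_{X_A} \Phi_A = 0$. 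The particular case in which both $Y$ and $S$ are symmetries for $\mathbf{X}$ follows at once from the analogous clause of the preceding theorem (since a symmetry is an $\mathbf{O}$-pseudosymmetry, hence in particular a $\mathbf{Y}$-pseudosymmetry).

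There is no genuine obstacle here; the corollary is a mechanical instantiation of the general result. The only subtlety worth flagging explicitly, as stressed in part (a) of the Remark above, is that the componentwise invariance $L_{X_A}(\omega_0)_A = 0$ is strictly stronger than the global identity $\sum_{A} L_{X_A}(\omega_0)_A = 0$ automatically supplied by $\mathbf{X} \in \mathcal{X}_H^k((T_k^1)^{\ast} M)$, and must therefore be assumed separately in order for the reduction to the preceding theorem to go through.
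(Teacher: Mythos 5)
Your proposal is correct and matches the paper's (implicit) argument: the corollary is obtained exactly as you describe, by instantiating the general theorem of Section 5 with $\mathcal{M}=(T_{k}^{1})^{\ast}M$, $p=2$, the invariant forms $(\omega_{0})_{A}$, and the single $\mathbf{Y}$-pseudosymmetry $S$, the particular case following since a symmetry is an $\mathbf{O}$-pseudosymmetry. Your remark that the componentwise condition $L_{X_{A}}(\omega_{0})_{A}=0$ must be assumed separately is precisely the point the paper makes in part (a) of its Remark.
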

\begin{corollary}
\bigskip Let $\left( T_{k}^{1}M,(\omega _{L})_{A},E_{L}\right) $ be a $k$-symplectic Lagrangian system and $\mathbf{X}=(X_{1},\dots
,X_{k})$ be an integrable evolution $k$-vector field associated to
$H=E_L$, i.e. $\mathbf{X} \in \mathcal{X}_{L}^{k}(T_{k}^{1}M)$. Let us suppose that $L_{X_{A}}(\omega
_{L})_{A}=0$, for all $A=1,\ldots ,k$. Let $Y \in \mathcal{X} (T_{k}^{1}M)$ be a symmetry of $\mathbf{X}$ and the $k$-vector field $\mathbf{Y}=(Y, \ldots , Y)$ on $T_{k}^{1}M$. If we have a vector field $S$ on $T_{k}^{1}M$  which is a $\mathbf{Y}$-pseudosymmetry of $\mathbf{X}$, then
\begin{equation*}
\mathbf{\Phi }=\left( \Phi _{1},\ldots ,\Phi _{k}\right) ,
\end{equation*}
is a conservation law for $\mathbf{X}=(X_{1},\dots ,X_{k})$, where
$\Phi _{A}=(\omega _{L})_{A}\left( S , Y \right) $, for all
$A=1,\ldots ,k$.

Particularly, if $\mathbf{Y}$, $\mathbf{S}$ are symmetries for
$\mathbf{X}$
then $\mathbf{\Phi }$ is a  conservation law for $\mathbf{X}=(X_{1},\dots ,X_{k})$.
\end{corollary}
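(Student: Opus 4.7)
The statement is a direct specialization of the main theorem of this section to the Lagrangian setting, so the plan is to invoke that theorem on the manifold $\mathcal{M} = T_{k}^{1}M$ equipped with the $k$-symplectic structure $((\omega_L)_1, \ldots, (\omega_L)_k; V)$. This $k$-symplectic structure is available precisely because $L$ is regular, as recorded in the previous section.

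First I would identify the input data for the main theorem. Since each $(\omega_L)_A$ is a $2$-form, the relevant degree is $p=2$, so the theorem asks for exactly $p-1 = 1$ vector field that is a $\mathbf{Y}$-pseudosymmetry of $\mathbf{X}$; this role is played by $S$. The invariance hypothesis $L_{X_A}(\omega_L)_A = 0$ for each $A$ is part of the corollary's assumptions, so the family $((\omega_L)_A)_{A=1,\ldots,k}$ qualifies as invariant $p$-forms in the required sense. Likewise, the assumption that $Y$ is a symmetry of $\mathbf{X}$, i.e.\ $L_{X_A} Y = 0$ for every $A$, is given.

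Next I would apply the main theorem verbatim: it yields that $\mathbf{\Phi} = (\Phi_1, \ldots, \Phi_k)$ defined by $\Phi_A = (\omega_L)_A(S, Y)$ satisfies $\sum_{A=1}^{k} L_{X_A} \Phi_A = 0$, which is exactly the defining condition for a conservation law of the $k$-vector field $\mathbf{X}$. The particular case in which $S$ is itself a symmetry of $\mathbf{X}$ is then immediate from the observation (recorded right after the definition of $\mathbf{Z}$-pseudosymmetry) that every symmetry is in particular a $\mathbf{Y}$-pseudosymmetry with vanishing coefficients $\lambda_A^B$.

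There is essentially no obstacle beyond the bookkeeping check that the hypotheses of the main theorem hold in this Lagrangian context: integrability of $\mathbf{X}$ is assumed, regularity of $L$ supplies the $k$-symplectic structure, and the invariance condition $L_{X_A}(\omega_L)_A = 0$ is explicitly posited. It is worth emphasising here what the accompanying remark already noted, namely that $L_{X_A}(\omega_L)_A = 0$ for each individual $A$ is a genuine extra assumption and does not follow merely from $\mathbf{X}\in\mathcal{X}_L^k(T_{k}^{1}M)$; once it is granted, however, the conservation law is produced directly by the main theorem with no further calculation.
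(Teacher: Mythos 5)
Your proposal is correct and matches the paper's own (implicit) argument: the corollary is obtained exactly by specializing the main theorem with $p=2$ to $\mathcal{M}=T_{k}^{1}M$ and the forms $(\omega_L)_A$, with $S$ playing the role of the single $\mathbf{Y}$-pseudosymmetry. Your added remark that $L_{X_A}(\omega_L)_A=0$ is a genuine extra hypothesis not implied by $\mathbf{X}\in\mathcal{X}_L^{k}(T_{k}^{1}M)$ is also consistent with the paper's own Remark on this point.
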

\begin{remark}\rm
If each vector fields $X_1$, $\ldots$, $X_k$  of
$\mathbf{X} \in \mathcal{X}_{L}^{k}(T_{k}^{1}M)$ are Cartan
symmetries for $L$, then we have $L_{X_{A}}(\omega _{L})_{A}=0$,
for all $A=1,\ldots , k$, and then we can apply  the last
corollary for this $k$-vector field $\mathbf{X}$. Moreover, we
have that $(H,\ldots ,H)$ is a conservation law for
$\mathbf{X}=(X_{1},\dots ,X_{k})$, where $H=E_L$.
\end{remark}
\begin{example}\rm  (\cite{modestobuc}, \cite{olver})
a) If we consider the Lagrangians $L_1, L_2: T_2^1 \mathbf{R} \rightarrow \mathbf{R}$, $$L_1(x,v_1,v_2)=\frac{1}{2} ( \sigma (v_1)^2 - \tau (v_2)^2  ) \, , \, \, L_2(x,v_1,v_2)= \sqrt{1+(v_1)^2 + (v_2)^2} \, , $$ then the vector field $X=\frac{\partial}{\partial x}$ is a Cartan symmetry for $L_1$ and $L_2$. The induced conservation laws are $\mathbf{\Phi}=( \Phi_{1}= \sigma v_1, \Phi_2= -\tau v_2)$ for $L_1$ and\\$\mathbf{\Phi}=( \Phi_{1}= \frac{v_1}{\sqrt{1+(v_1)^2 + (v_2)^2}}, \Phi_2= \frac{v_2}{\sqrt{1+(v_1)^2 + (v_2)^2}} )$ for $L_2$.\\
Let us observe that the above Lagrangians corresponde to the
vibrating string equations and, respectively to the equations of
minimal surfaces.

b) For the Lagrangian $L: T_3^1 \mathbf{R} \rightarrow
\mathbf{R}$, defined by
$$L(x,v_1,v_2,v_3)=\frac{1}{2}\left(
(v_1)^2 + (v_2)^2 +(v_3)^2  \right) \, ,$$
the vector field $X=\frac{\partial}{\partial x}$ is a Cartan symmetry, and the
induced conservation law is $\mathbf{\Phi }=\left( \Phi_{1},\Phi_2
,\Phi_{3}\right)$, where $\Phi_i  = v_i$, $i=1,2,3$. The
Euler-Lagrange equations corresponding to $L$ are the Laplace's
equations.

c) For the Lagrangian $L: T_2^1 \mathbf{R}^2 \rightarrow
\mathbf{R}$, defined by
$$L(x^1,x^2,v^1_1,v^1_2,v_1^2,v_2^2) = \left( \frac{1}{2}\lambda +\nu \right)\left[ (v_1^1)^2+(v_2^2)^2 \right]
 +\frac{1}{2}\nu  \left[ (v_2^1)^2+(v_1^2)^2 \right] +(\lambda+\nu) v_1^1 v_2^2 \, ,$$
the vector field $X=\frac{\partial}{\partial
x^1}+\frac{\partial}{\partial x^2}$ is a Cartan symmetry, and the
induced conservation law is $\mathbf{\Phi }=\left( \Phi_{1},\Phi_2
\right)$, where
 $\Phi_1  = (\lambda+2\nu)v_1^1+\nu v_1^2 +(\lambda+\nu)v_2^2$, $\Phi_2  = (\lambda+\nu)v_1^1+\nu v_2^1 +(\lambda+2\nu)v_2^2$.
The Euler-Lagrange equations corresponding to $L$ are the Navier's
equations.
\end{example}

\textbf{Acknowledgments.} This research was supported by FP7-PEOPLE-2012-IRSES-316338.

\end{document}